\newtheorem{theorem}{Theorem}[section]
\theoremstyle{plain}
\newtheorem{definition}[theorem]{Definition}
\newtheorem{proposition}[theorem]{Proposition}
\newtheorem{remark}[theorem]{Remark}
\numberwithin{equation}{section}
\newcommand{\R}{\mathbb{R}}
\newcommand{\pa}{\partial}
\newcommand{\Om}{\Omega}
\newcommand{\cL}{\mathcal{L}}
\newcommand{\La}{\mathscr{L}}
\newcommand{\ve}{\varepsilon}
\newcommand{\cH}{\mathscr{H}}
\newcommand{\vf}{\varphi}
\newcommand{\B}{\mathscr B}
\newcommand{\Sc}{\mathscr S}
\newcommand{\Ls}{\left(-\La\right)}
\newcommand{\goto}{\rightarrow}
\newcommand{\Rn}{\R^n}
\newcommand{\Sn}{\mathscr{S}(\Rn)}
\newcommand{\la}{\lambda}
\newcommand{\p}{\partial}
\title[The Harnack inequality for a class of nonlocal, etc.]{The Harnack inequality for a class of nonlocal parabolic equations}
\author{Agnid Banerjee}
\address{TIFR CAM, Bangalore-560065} \email[Agnid Banerjee]{agnidban@gmail.com}
 \thanks{First author is supported in part by SERB Matrix grant MTR/2018/000267}
\author{Nicola Garofalo}
\address{Dipartimento di Ingegneria Civile, Edile e Ambientale (DICEA) \\ Universit\`a di Padova\\ 35131 Padova, ITALY}
\email[Nicola Garofalo]{nicola.garofalo@unipd.it}
\thanks{Second author was supported in part by a Progetto SID (Investimento Strategico di Dipartimento) ``Non-local operators in geometry and in free boundary problems, and their connection with the applied sciences", University of Padova, 2017.}
\author{Isidro H. Munive}
\address{Department of Mathematics, University Center of Exact Sciences and Engineering, University of Guadalajara, 44430 Guadalajara, Mexico}\email[Isidro Munive]{imunivel@gmail.com }
\thanks{Third author is supported by CONACYT grant 265667, Instituto de Matem\'aticas, UNAM}
\author{Duy-Minh Nhieu}
\address{Department of Mathematics, National Central University, Zhongli District, Taoyuan city 32001, Taiwan, R.O.C.}\email[Duy-Minh Nhieu]{dmnhieu@math.ncu.edu.tw}
\thanks{Fourth author supported by the ministry of Science and Technology, R.O.C., Grant MOST 108-2115-M-008-011}
\begin{document}
\maketitle

\tableofcontents

\begin{abstract}
In this paper we establish a scale invariant Harnack inequality for the fractional powers of parabolic operators $(\p_t - \La)^s$, $0<s<1$, where $\La$ is the infinitesimal generator of a class of symmetric semigroups. As a by-product we also obtain a similar result for the nonlocal operators $\Ls^s$. Our focus is on non-Euclidean situations. 
\end{abstract}

\section{Introduction}
In Euclidean spaces the fractional powers of the Laplacian, $(-\Delta)^s$,  have been studied in a variety of contexts, ranging from isotropic diffusion with jumps to thin obstacle problems and phase transition problems. See for instance \cite{CSS}, \cite{SV}, \cite{CSire}, and also \cite{Gft} for an up-to-date overview of recent developments. The Harnack inequality for the nonlocal Laplacian was first established by Landkof in \cite{La} using potential methods. One should also see the contributions of K. Bogdan, see \cite{Bo} and subsequent papers by the same author with various coauthors, and also those of Kassmann, see \cite{Ka1, Ka2}. In their 2007 celebrated work \cite{CS} Caffarelli and Silvestre gave an alternative proof based on the so-called extension method. They showed that a solution to the nonlocal equation $(-\Delta)^s u =0$ in an open set $\Om \subset \R^n$ can be lifted to a function $U$ in $\Om\times (0,\infty)\subset \R^{n+1}$ which solves a degenerate elliptic equation  satisfying the structural assumptions in the work of Fabes, Kenig and Serapioni \cite{FKS}. This allowed them to obtain  the Harnack inequality for $(-\Delta)^s$ from the De Giorgi-Nash-Moser theory developed in \cite{FKS} of such degenerate elliptic equations.

The focus of the present paper are situations that go beyond the Euclidean setting recalled above. More precisely, we prove a strong Harnack inequality for a class of degenerate parabolic equations  which arise in the extension problem for nonlocal operators $\cH^s=(\partial_t - \La)^s$, $0<s<1$, where $\La$ is, for instance, a locally subelliptic operator. A similar inequality for the time-independent case easily descends from this result. Similarly to what was done in \cite{CS}, with such Harnack inequalities we then obtain corresponding ones for $\cH^s$ and $(-\La)^s$. Some basic scenarios to which, under the hypothesis specified in Section \ref{pl}, our results apply are as follows: 
\begin{itemize}
\item[(i)] $\La$ is a sub-Laplacian on a Carnot group $\mathbb{G}$, see \cite{Fo};
\item[(ii)] $\La$ is a sub-Laplacian generated by a system of vector fields satisfying H\"ormander's finite rank condition, see \cite{Hor};
\item[(iii)] $\La$ is a locally sub-elliptic operator in the sense of Fefferman and Phong, see \cite{FP};
\item[(iv)] $\La$ is the infinitesimal generator of a strongly continuous symmetric semigroup on $L^2$. 
\end{itemize}
Given the operator $\La$ we consider the associated heat operator $\cH = \p_t - \La$.
Using the  spectral resolution of $-\La$, and the Fourier transform in the time variable $t$, we first introduce the relevant function spaces $W^{2s}$ and their parabolic counterpart $ H^{2s}$. These spaces constitute the natural domains for the fractional powers $(-\La)^s$ and $\cH^s$ respectively, and they generalise the time-independent fractional Folland-Stein Sobolev space $W^{2s,2}(\mathbb{G})$ defined in \cite{Fo} in the setting of  Carnot groups. We then show that for a ``Dirichlet" datum $u \in H^{2s}$ the extension problem for $\cH^s$ is well-defined. Moreover, the corresponding solution belongs to the right ``energy space". The following is the central result in this direction.

\begin{theorem}\label{wk2}
Given a function $u \in H^{2s}$, let $V$ be defined by
\begin{equation}\label{v}
V(x,z,t) =
\int_0^\infty \int_{\R^n}  P^{(a)}_z(x,y,\tau) u(y,t - \tau) dy d\tau, 
\end{equation} 
where the function $P^{(a)}_z(x,y,t)$ denotes the \emph{Poisson kernel} for the extension operator defined in \eqref{extKernel} below.
Then, $V$ solves the degenerate parabolic partial differential equation in \eqref{ep}. Furthermore, we have 
\begin{equation}\label{tr2}
\lim_{z\rightarrow 0^+}\|V(\cdot,z,\cdot)-u\|_{L^{2}\left(\R^{n+1}\right)}=0,
\end{equation}
and
\begin{equation}\label{dn2}
\lim_{z\rightarrow 0^+} \left\|\frac{2^{-a}\Gamma\left(\frac{1-a}{2}\right)}{\Gamma\left(\frac{1+a}{2}\right)}z^a\frac{\pa V}{\pa z}(\cdot,z,\cdot) + \cH^{s}u\right\|_{L^{2}\left(\R^{n+1}\right)} = 0.
\end{equation}
Finally, given $M>0$, and with $\Sigma= \R^n \times (0, M) \times \R$, there exists $C=C(a,\Sigma)>0$ such that 
\begin{equation}\label{en2}
\|V\|_{\cL^{1,2}(\Sigma, z^a dx dz dt)} \leq  C\|u\|_{\mathcal{H}^{2s}}.
\end{equation} 
\end{theorem}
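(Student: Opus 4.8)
The theorem has four assertions to establish: (1) $V$ solves the degenerate parabolic PDE \eqref{ep}; (2) the trace convergence \eqref{tr2}; (3) the Neumann-type (conormal) convergence \eqref{dn2}; (4) the energy estimate \eqref{en2}. I will use the spectral resolution of $-\La$ together with the time-Fourier transform throughout.

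Let me think about the structure. The Poisson kernel $P^{(a)}_z(x,y,\tau)$ for the extension operator—based on the Caffarelli–Silvestre philosophy—should be expressible via the heat kernel $p(x,y,\tau)$ of $\La$ times a one-dimensional kernel in $(z,\tau)$. In the Euclidean case, $P^{(a)}_z(x,t) = \frac{1}{\Gamma(s)} \left(\frac{z^2}{4}\right)^s t^{-1-s} e^{-z^2/(4t)} \cdot$ (heat kernel part). So $V$ is essentially a subordination formula.

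**Step 1: $V$ solves the extension PDE.**

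The plan is to write $V$ using the spectral decomposition. If $u$ has the representation $u(x,t) = \int \hat{u}(\lambda, \xi)\, dE_\lambda(x)\, e^{i\xi t}\, d\xi$ (schematically), then $V(x,z,t)$ should be $\int \varphi(z, \sqrt{i\xi + \lambda})\, \hat{u}(\lambda,\xi)\, dE_\lambda\, e^{i\xi t} d\xi$ where $\varphi$ is the Bessel-type profile solving the ODE $\varphi'' + \frac{a}{z}\varphi' - \mu^2 \varphi = 0$ with $\varphi(0) = 1$. Because the extension equation \eqref{ep} is (schematically) $z^a(\p_t - \La)V = \p_z(z^a \p_z V)$, and $\La$ acts on the spectral side as multiplication by $-\lambda$ while $\p_t$ becomes $i\xi$, the PDE reduces fiber-by-fiber to exactly that ODE. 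So Step 1 amounts to: verify the Poisson kernel $P^{(a)}_z$ has the claimed spectral profile (this should have been set up where \eqref{extKernel} is defined), justify differentiation under the integral sign using the decay of $P^{(a)}_z$ and $u \in H^{2s}$, and conclude. I expect this to be routine once the kernel's spectral form is in hand.

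**Step 2 and Step 3: the boundary behaviors.**

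For \eqref{tr2}, on the spectral side $\|V(\cdot,z,\cdot) - u\|_{L^2}^2 = \int |\varphi(z,\mu_{\lambda,\xi}) - 1|^2\, d\|E_\lambda u\|^2\, d\xi$ where $\mu_{\lambda,\xi}^2 = \lambda + i\xi$ (or $|\lambda + i\xi|$-type quantity). Since $\varphi(0,\mu) = 1$, $|\varphi(z,\mu)| \le 1$, and $\varphi(z,\mu) \to 1$ as $z \to 0$ pointwise, dominated convergence gives the limit. For \eqref{dn2}, one needs the small-$z$ asymptotics of $z^a \varphi_z(z,\mu)$: the normalizing constant $\frac{2^{-a}\Gamma((1-a)/2)}{\Gamma((1+a)/2)}$ is chosen precisely so that $-\frac{2^{-a}\Gamma((1-a)/2)}{\Gamma((1+a)/2)} z^a \varphi_z(z,\mu) \to \mu^{1-a}$ as $z \to 0$, i.e. $\to \mu^{2s}$ with $2s = 1-a$. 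Thus $z^a \p_z V \to -(-\La + \p_t)^s u = -\cH^s u$ on the spectral side (modulo the constant), and again dominated convergence—using $u \in H^{2s}$ to dominate $\mu^{2s}|\hat u|$ in $L^2$—closes it. The key analytic inputs here are uniform (in $\mu$) bounds on $|\varphi(z,\mu)|$, $|\varphi_z(z,\mu)|$ and the precise first-order expansion of $\varphi$ near $z = 0$; these are classical facts about modified Bessel functions that should be quotable from the kernel construction preceding the theorem.

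**Step 4: the energy estimate — the main obstacle.**

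This is where the real work lies. I need $\int_\Sigma \left( z^a |\nabla_x V|^2 + z^a |\p_z V|^2 \right) dx\, dz\, dt \lesssim \|u\|_{\cH^{2s}}^2$ (the $\cL^{1,2}$ norm; there may also be an $L^2$-with-weight term, but on a slab $z \in (0,M)$ the gradient term dominates via Hardy/Poincaré or one simply includes it directly). On the spectral side,
\[
\int_0^M z^a |\p_z V|^2\, dz \;=\; \int_0^M z^a |\varphi_z(z,\mu)|^2\, dz \cdot |\hat u(\lambda,\xi)|^2,
\]
and the Caffarelli–Silvestre computation shows $\int_0^\infty z^a |\varphi_z(z,\mu)|^2\, dz = c\, \mu^{1-a}\int_0^\infty z^a |\psi'(z)|^2 dz = c' \mu^{2s}$ (after the change of variable $z \mapsto \mu z$), with an analogous identity for the $\La$-gradient term giving $c'' \lambda\, \mu^{2s-2} \le c'' \mu^{2s}$. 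Integrating against $d\|E_\lambda u\|^2 d\xi$ and recalling that by definition $\|u\|_{\cH^{2s}}^2 \sim \int (\lambda^2 + \xi^2)^{s}\, d\|E_\lambda u\|^2 d\xi \sim \int \mu^{4s}\,(\cdots)$—wait, I should double-check the relation between $\mu^{2s}$ and the $\cH^{2s}$-norm; the norm likely controls $\int |i\xi + \lambda|^{2s}|\hat u|^2$ which is exactly $\int \mu^{4s}|\hat u|^2$ if $\mu^2 = |i\xi+\lambda|$, or $\int\mu^{2s}|\hat u|^2$ under the other normalization—in any case one must ensure the power of $\mu$ produced by the $z$-integral matches (or is dominated by) the power defining $\cH^{2s}$. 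The subtle points are: (a) dealing with the \emph{complex} parameter $\mu_{\lambda,\xi}$ (since $\p_t$ contributes an imaginary part, $\varphi$ solves an ODE with complex coefficient — one must verify the Bessel asymptotics and $L^2$-identities persist, with constants uniform over the relevant sector $|\arg \mu| \le \pi/4$); (b) justifying Tonelli/Fubini to interchange the $z$-integral with the spectral integral, which needs the positivity of the integrand or an a priori $L^1$ bound; and (c) handling the truncation to $z < M$ versus $z < \infty$ — truncating only decreases the integral, so the infinite-slab bound suffices, but one should note this. I expect (a), the uniform control of the complex-parameter Bessel profile, to be the principal technical hurdle; everything else is bookkeeping with the spectral theorem and Plancherel in $t$.
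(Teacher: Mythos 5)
Your overall strategy---diagonalize via the spectral resolution of $-\La$ together with the Fourier transform in $t$, so that $V$ becomes multiplication on each fiber by a Bessel-type profile $\varphi(z,w)$ with $w=\lambda+2\pi i\sigma$, and then read off the trace, the weighted Neumann derivative, and the energy---is exactly the paper's. Steps 1--3 are acceptable as sketched: the PDE follows fiberwise (the paper instead differentiates under the integral and invokes the kernel identity of Proposition \ref{G-prop 9.6}); $|\varphi(z,w)|\le 1$ and $\varphi(z,w)\to 1$ give \eqref{tr2} by dominated convergence (the paper in fact gets the quantitative rate $z^{2(1-a)}\|\widehat{\cH^{s}u}\|^2$ in \eqref{lh1}); and the bound $|z^a\partial_z\varphi(z,w)|\le C|w|^{s}$, which you need to dominate in \eqref{dn2} and defer to ``classical facts'', is precisely what the paper extracts in \eqref{verygud} via the identity \eqref{observe} and a contour shift.

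The genuine gap is in Step 4, and you point at it without closing it. The identities $\int_0^\infty z^a|\partial_z\varphi(z,w)|^2\,dz\simeq |w|^{s}$ and the analogous one for the horizontal gradient are evident by scaling only for real $w=\lambda$; for complex $w=\lambda+2\pi i\sigma$ (in particular $\lambda$ small and $|\sigma|$ large) you need uniform estimates for the profile over the sector, and you leave both this and the matching of powers with the definition \eqref{H2s} of $\|\cdot\|_{\mathcal H^{2s}}$ explicitly unresolved. This is exactly where the paper's proof invests its effort: Cauchy's theorem applied along the ray $\{w\tau:\tau>0\}$ converts the subordination integrals into $w^{\frac{1-a}{2}}$ times integrals whose modulus is controlled by real-parameter ones; for $\partial_z V$ a crude bound then suffices after integrating $z^{a}\cdot z^{-2a}$ over $(0,M)$; and for $XV$ one uses $\|Xf\|_{L^2}=\|\Ls^{1/2}f\|_{L^2}$, splits into $\lambda\le 1$ and $\lambda\ge 1$, and represents the remaining $\theta$-integral through the modified Bessel function $K_{\frac{1-a}{2}}$, whose asymptotics give $\int_0^M K_{\frac{1-a}{2}}(z\sqrt\lambda)^2 z^a\,dz\le C\lambda^{-\frac{1+a}{2}}$. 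Without these (or an equivalent uniform sector analysis, carried out rather than announced) the estimate \eqref{en2} is not proved. A minor point: the suggestion that the weighted $L^2$ term is controlled ``via Hardy/Poincar\'e'' is not correct as stated, but your fallback works---$|\varphi|\le 1$ gives $\|V(\cdot,z,\cdot)\|_{L^2}\le\|u\|_{L^2}$, then integrate $z^a$ on $(0,M)$; the paper obtains the same bound on the physical side via Minkowski's inequality and stochastic completeness.
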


For the definition of the Sobolev space $\cL^{1,2}(\Sigma, z^a dx dz dt)$, one should see \eqref{ss} below. Once Theorem \ref{wk2} is established, we are able to obtain the Harnack inequality for the nonlocal operator $\cH^s$ in the thin space from the regularity results of the corresponding extension operator $\cH_a$ in the thick space. In this perspective, the analysis becomes at this point similar to the above described approach for the fractional Laplacian in \cite{CS} based on the De-Giorgi-Nash-Moser theory developed in the seminal work  \cite{FKS} of Fabes, Kenig and Serapioni, and its parabolic counterpart \cite{CSe} by Chiarenza and Serapioni. Our main result is the following.

\begin{theorem}\label{parharnack} 
Let $0<s<1$, and $u \in H^{2s}$, with $u \geq 0$ in $\R^n \times \R$, be a solution to $\cH^{s}u = 0$
in $B(x, 2r) \times (-2r^2, 0]$. Then, the following Harnack inequality holds
\begin{equation}\label{parhar1}
\underset{B(x,r) \times (-r^2, -\frac{r^2}{2}]}{\sup}\ u  \leq  C \underset{B(x, r) \times (-\frac{r^2}{4},0]}{\inf}\  u,
\end{equation}
for some universal $C>0$. Here $B(x,r)$ denotes  the relative metric ball  of radius $r$ centered at $x$ associated with the \emph{carr\'e du champ} corresponding to $\mathscr{L}$ (see Section \ref{pl} for the precise notion).
\end{theorem}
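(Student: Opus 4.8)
The plan is to adapt the Caffarelli--Silvestre strategy of \cite{CS} to the present degenerate parabolic situation: lift $u$ to the solution of the extension problem, reflect across the thin manifold, apply the parabolic De Giorgi--Nash--Moser theory of Chiarenza and Serapioni \cite{CSe} (the parabolic analogue of \cite{FKS}) in the thick space, and restrict back to $\{z=0\}$.

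First I would invoke Theorem~\ref{wk2} to lift $u$ to its extension $V$ given by \eqref{v}. Since $u\ge 0$ on all of $\R^n\times\R$ and the Poisson kernel $P^{(a)}_z(x,y,\tau)$ of \eqref{extKernel} is nonnegative, we get $V\ge 0$ in $\R^n\times(0,\infty)\times\R$. By construction $V$ solves the degenerate parabolic equation \eqref{ep} in the interior $\{z>0\}$ over the whole $x$-slab, and the energy bound \eqref{en2} shows that on each slab $\Sigma$ it is a weak solution in $\cL^{1,2}(\Sigma, z^a\,dx\,dz\,dt)$. The role of the hypothesis $\cH^{s}u=0$ in $B(x,2r)\times(-2r^2,0]$ is this: by the Neumann relation \eqref{dn2}, the weighted conormal derivative $\lim_{z\to 0^+} z^a \partial_z V$ vanishes on $B(x,2r)\times\{0\}\times(-2r^2,0]$, while by \eqref{tr2} the trace of $V$ on $\{z=0\}$ equals $u$. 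These two facts are exactly the gluing conditions needed to reflect.

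Second, I would set $\tilde V(x,z,t):=V(x,|z|,t)$, the even reflection across $\{z=0\}$. Since the coefficient matrix of the extension operator $\cH_a$ is block diagonal --- the (degenerate elliptic, w.r.t.\ the \emph{carr\'e du champ}) structure of $\La$ in the $x$-variables and a trivial block in $z$ --- and the weighted conormal flux vanishes on the relevant portion of $\{z=0\}$, the function $\tilde V$ is a nonnegative weak solution of the degenerate parabolic equation with weight $|z|^a$ in the two-sided cylinder $B(x,2r)\times(-\rho_0,\rho_0)\times(-2r^2,0]$, for any $\rho_0>0$ (recall \eqref{ep} holds for all $z>0$). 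The key technical point, and the step I expect to be the main obstacle, is to verify that $\tilde V$ falls within the scope of \cite{CSe} in the present merely locally subelliptic, non-Euclidean setting: namely that the ambient space --- the product of $(\R^n,d_{\mathscr{L}},\mu)$, with $d_{\mathscr{L}}$ the metric of the \emph{carr\'e du champ} and $\mu$ the symmetric measure of $\La$, and the line $(\R,|z|^a\,dz)$ --- is doubling and supports a scale-invariant $2$-Poincar\'e inequality (the first factor by the hypotheses of Section~\ref{pl}, and $|z|^a$ being a Muckenhoupt $A_2$ weight since $a=1-2s\in(-1,1)$), and that the coefficients of $\cH_a$ satisfy the degenerate ellipticity bounds of \cite{CSe} relative to $|z|^a$. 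Granting this, \cite{CSe} yields both local H\"older continuity of $\tilde V$ up to and across $\{z=0\}$ --- so that the $\sup$ and $\inf$ below are taken pointwise and $\tilde V(\cdot,0,\cdot)=u$ --- and the scale-invariant parabolic Harnack inequality on thick cylinders.

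Finally I would apply that Harnack inequality. Writing $\mathcal{B}((x,0),\rho)$ for the metric ball of radius $\rho$ in the product space centered at $(x,0)$, which is comparable to $B(x,\rho)\times(-c\rho,c\rho)$, and choosing $\rho_0\simeq r$, \cite{CSe} gives
\begin{equation*}
\sup_{\mathcal{B}((x,0),r)\times(-r^2,-\frac{r^2}{2}]}\tilde V \;\le\; C\,\inf_{\mathcal{B}((x,0),r)\times(-\frac{r^2}{4},0]}\tilde V,
\end{equation*}
possibly after a standard Harnack-chain/covering argument to match this geometry of cylinders, with $C$ depending only on $a$ and the doubling and Poincar\'e constants attached to $\mathscr L$, hence universal. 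Restricting to the slice $z=0$, where $B(x,r)\times\{0\}\subset\mathcal{B}((x,0),r)$ and $\tilde V=u$, produces \eqref{parhar1}. (A variant avoiding reflection would keep $V$ on the one-sided cylinder and use a boundary Harnack inequality up to the Neumann portion of $\{z=0\}$; the reflection route is cleaner and reduces everything to the interior theory of \cite{CSe}.)
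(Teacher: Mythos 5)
Your proposal is correct and follows essentially the same route as the paper: lift $u$ via Theorem \ref{wk2}, reflect evenly across $\{z=0\}$ using the vanishing weighted Neumann condition, apply the scale-invariant Harnack inequality for $\cH_a$ in the thick space (which the paper establishes as Theorem \ref{harext} by verifying exactly the doubling and weighted Poincar\'e properties for the extended system that you flag as the key technical point, since the non-smooth weight $|z|^a$ puts the operator outside the smooth frameworks of \cite{Gry}, \cite{Sa} and forces the \cite{CSe}-type argument), and then restrict to the thin space with a covering/Harnack-chain step, which the paper handles by invoking the argument of Theorem 4.8 in \cite{FF}.
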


Theorem \ref{parharnack} implies the following corresponding Harnack inequality for nonnegative solutions of the nonlocal operator $(-\La)^s$.

\begin{theorem}\label{harnack}
Let $s \in (0, 1)$ and  let $u \in W^{2s}$, with $u \geq 0$ in $\R^n$, be a solution in $B(x,2r)$ to $\Ls^{s} u  = 0$.
Then, there exists a universal $C>0$ such that 
\begin{equation}\label{main}
\sup_{B(x,  r)} u  \leq  C \inf_{B(x,r)}  u.
\end{equation}
\end{theorem}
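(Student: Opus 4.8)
The plan is to deduce Theorem~\ref{harnack} from the parabolic statement Theorem~\ref{parharnack} by the standard device of regarding a stationary solution of $(-\La)^s u=0$ as a time-independent solution of $\cH^s u=0$. The first thing I would record is the elementary identity that on functions not depending on $t$ the operator $\cH^s$ collapses to $(-\La)^s$. This follows from the Bochner subordination representation
\begin{equation*}
\cH^s f(x,t) = \frac{1}{\Gamma(-s)}\int_0^\infty\Big(\big(e^{\sigma\La}f(\cdot,t-\sigma)\big)(x) - f(x,t)\Big)\frac{d\sigma}{\sigma^{1+s}},
\end{equation*}
which expresses $\cH^s$ through the heat semigroup $e^{\sigma\La}$ of $\La$ composed with time translations, consistently with the spectral/Fourier definition of $H^{2s}$ used earlier. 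If $f(x,t)=u(x)$ is $t$-independent the time shift is inert and the right-hand side reduces to the semigroup (Balakrishnan) formula $\frac{1}{\Gamma(-s)}\int_0^\infty(e^{\sigma\La}u(x)-u(x))\,\sigma^{-1-s}d\sigma=(-\La)^s u(x)$. Hence, given $u\in W^{2s}$ with $u\ge 0$ and $(-\La)^s u=0$ in $B(x,2r)$, the lift $\tilde u(y,t):=u(y)$ is nonnegative on $\R^{n+1}$ and satisfies $\cH^s\tilde u=0$ in $B(x,2r)\times(-2r^2,0]$.

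Next I would feed $\tilde u$ into Theorem~\ref{parharnack}. The only delicate point is that $\tilde u\notin H^{2s}$, since a nonzero $t$-independent function is not square integrable on $\R^{n+1}$; so Theorem~\ref{parharnack} cannot be quoted verbatim. I see two routes around this. The clean one is to observe that the conclusion \eqref{parhar1} is purely local and that its proof runs through the extension problem of Theorem~\ref{wk2} together with the local parabolic De~Giorgi--Nash--Moser estimates of \cite{CSe}: for the $t$-independent datum $\tilde u$ the corresponding extension $V(y,z,t)$ is exactly the elliptic extension $U(y,z)$ of $u$, which solves the degenerate parabolic equation \eqref{ep} trivially (being $t$-independent), is nonnegative, and lies in the relevant local weighted energy space precisely because $u\in W^{2s}$ — this being the stationary, localised counterpart of the estimate \eqref{en2}. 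Thus the proof of Theorem~\ref{parharnack} applies to $V=U$ with no essential change. The alternative, if one prefers to stay literally within the stated hypotheses, is to truncate: set $\tilde u_k(y,t)=\eta_k(t)u(y)$ with $\eta_k\in C_c^\infty(\R)$, $\eta_k\equiv 1$ on $(-4r^2,k]$, so that $\tilde u_k\in H^{2s}$; then in $B(x,2r)\times(-2r^2,0]$ one has $\cH^s\tilde u_k$ equal only to the nonlocal-in-time error $\frac{1}{\Gamma(-s)}\int_0^\infty(\eta_k(t-\sigma)-1)\big(e^{\sigma\La}u\big)(y)\,\frac{d\sigma}{\sigma^{1+s}}$, which is supported in $\{\sigma\gtrsim r^2\}$, smooth, and uniformly small; one then invokes a Harnack inequality with a right-hand side (equivalently a small perturbation of the iteration) and lets $k\to\infty$.

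Once \eqref{parhar1} is available for $\tilde u$, the conclusion is immediate. Since $\tilde u(y,t)=u(y)$ does not depend on $t$, both the supremum over $B(x,r)\times(-r^2,-\frac{r^2}{2}]$ and the infimum over $B(x,r)\times(-\frac{r^2}{4},0]$ are computed on $B(x,r)$, and \eqref{parhar1} becomes
\begin{equation*}
\sup_{B(x,r)} u \;=\; \sup_{B(x,r)\times(-r^2,-\frac{r^2}{2}]}\tilde u \;\le\; C\inf_{B(x,r)\times(-\frac{r^2}{4},0]}\tilde u \;=\; C\inf_{B(x,r)} u,
\end{equation*}
which is \eqref{main} with the same universal constant.

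I expect the main obstacle to be exactly the reconciliation of function spaces described in the second paragraph: verifying that the proof of Theorem~\ref{parharnack} genuinely uses only the local solution property of $\tilde u$ together with the local weighted energy bounds for its extension — so that global membership in $H^{2s}$ is dispensable — or, in the truncation variant, controlling the error term and the limit $k\to\infty$ quantitatively. The remaining ingredients, namely the identity $\cH^s u=(-\La)^s u$ on stationary functions and the passage from parabolic cylinders to metric balls, are routine.
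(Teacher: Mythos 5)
Your first route is essentially the paper's own argument: the paper also treats a stationary solution via its time-independent extension $U$ from \eqref{U}, and it resolves exactly the function-space issue you identify by stating the stationary extension result (Theorem \ref{wk1}, with the energy bound \eqref{en1} and the Neumann trace \eqref{dn1} in place of the $H^{2s}$-based \eqref{tr2}--\eqref{en2}), so that the even reflection and the Harnack inequality for the degenerate extension operator (Theorem \ref{harext}) apply to $U$ as a $t$-independent solution, followed by restriction to the thin space as in \cite{FF}. Your truncation variant is not needed, and your reduction $\cH^s\tilde u=(-\La)^s u$ for $t$-independent data matches the paper's observation that $P^{\cH}_{\tau}$ reduces to $P_\tau$ on such functions.
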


We mention that Theorem \ref{harnack} represents a generalisation of the result first established for sub-Laplacians in Carnot groups by Ferrari and Franchi in \cite{FF}. We also mention that for the standard fractional heat operator $(\p_t - \Delta)^s$ in Euclidean space a strong Harnack inequality was recently proved by the first and second named authors in \cite{BaG}. Their approach was based on the extension problem for the fractional heat equation that, for $s = 1/2$, was first introduced by F. Jones in his pioneering paper \cite{Jo}, and more recently developed for all $s\in (0,1)$ independently by Nystr\"om and Sande \cite{NS}, and Stinga and Torrea \cite{ST}.

The present work is organised as follows.  In Section \ref{pl} we introduce some basic notations and gather some known results which are relevant to our work in the subsequent sections. We then recall the notions of $\cH^s$ and $\Ls^s$ developed in \cite{G} based on Balakrishnan's calculus. Combining the spectral resolution for $- \La$ with the Fourier transform in the $t$ variable we also define the relevant function spaces  $W^{2s}$, and their parabolic counterpart $H^{2s}$. In Section \ref{et1} we recall the extension problem for $\cH^s$ (following \cite{G}, the one for $\Ls^s$ is obtained from the latter using Bochner's subordination). We then prove our main result, Theorem \ref{wk2}, which shows that such problem is well-defined in the parabolic fractional  Sobolev spaces $H^{2s}$. Finally, in Section \ref{final} we establish the Harnack inequality for $\cH^s$, using the ideas developed in \cite{CSe}, \cite{Gry}, \cite{Sa} and \cite{CS}. 


\section{Preliminaries}\label{pl}

In this section we collect some known results that will be used throughout our work. We emphasise that although we will confine our attention to the setting of smooth sub-Laplacians, we could have worked in the more abstract setting of symmetric Dirichlet forms supporting a doubling condition and a Poincar\'e inequality without essential changes. 
In what follows we consider $\Rn$ endowed with Lebesgue measure (alternatively, we could have worked with a smooth measure on $\Rn$ and our results would hold unchanged) and with a system of globally defined smooth vector fields $X_1,...,X_m$. We indicate with $X_i^\star$ the formal adjoint of $X_i$ in $L^2(\Rn)$, and with $\La = - \sum_{i=1}^m X_i^\star X_i$ the sub-Laplacian associated with the $X_i$'s. We assume that $\La$ be locally subelliptic in the sense of \cite{FP}. The relative \emph{carr\'e du champ} will be denoted by 
\begin{equation}\label{cdushamp}
|Xu|^2 = \frac 12 (\La(u^2) - 2 u \La u).
\end{equation}
 We note that, indicating with $(\cdot,\cdot)$ the inner product in $L^2(\Rn)$, we have for $u, v\in \Sn$
\[
(-\La u,v) = (u,-\La v),\ \ \ \ \ \ \ \ \ (-\La u,u) = \int_{\Rn} |Xu|^2 \ge 0,
\]
thus the operator $-\La$ is symmetric and positive. We define
\begin{equation}\label{d}
d(x,y) = \sup\{|\vf(x) - \vf(y)|\mid \vf\in C^\infty(\Rn),\ |X\vf|\le 1\},
\end{equation}
see \cite{BM, BM2} and \cite{Davies}.
Throughout the paper we assume that $d(x,y)$ defines a distance on $\Rn$. We also assume that the metric topology is equivalent to the pre-existing Euclidean one in $\Rn$, see \cite[Assumption A]{Sturm}, and also \cite{GN} for a discussion of this aspect. The relative balls will be denoted by $B(x,r)$, and we let $V(x,r) = |B(x,r)|$, where $|E|$ indicates the Lebesgue measure of the set $E$. 
We will assume that there exists a constant $C_D>0$ such that for all $x\in \R^n$ and $r>0$ one has
\begin{equation}\label{doubling}
V(x,2r) \leq  C_D V(x,r).
\end{equation}
A third basic hypothesis that we make is that there exists another constant $C_P>0$ such that for every ball $B(x,r)$, and every function $u\in C^{0,1}(B(x,2r))$, one has
\begin{equation}\label{poi}
\int_{B(x,r)} |u - u_B|^2 dx \le C_P\ r^2 \int_{B(x,2r)} |Xu|^2 dx. 
\end{equation}
Under these assumptions it is well-known from \cite{Gry} and \cite{Sa} that one has the Harnack inequality for the heat operator $\cH = \frac{\pa}{\pa t} - \La$. In particular, the heat semigroup 
\begin{equation}\label{pt}
P_t f(x) = e^{-t \La} f(x) = \int_{\Rn} p(x,y,t) f(y) dy,
\end{equation}
 has a strictly positive kernel $p(x,y,t)$ satisfying the Gaussian bounds:
there exist constants $C, \alpha, \beta >0$ such that for every $x,y\in \Rn$
and $t>0$ one has
\[
\frac{C}{V(x,\sqrt
t))^{\frac{1}{2}}V(y,\sqrt t))^{\frac{1}{2}}} e^{- \frac{\alpha d(x,y)^2}{t}} \le p(x,y,t)\le \frac{C^{-1}}{V(x,\sqrt
t))^{\frac{1}{2}}V(y,\sqrt t))^{\frac{1}{2}}} e^{- \frac{\beta d(x,y)^2}{t}}.
\]
Before proceeding we note that, under our assumptions, the semigroup is stochastically complete, i.e., $P_t1 = 1$. This means  that for every $x \in \R^n$ and $t > 0$ one has 
\begin{equation}
\label{stc}
\int_{\R^n}p(x,y,t) dy = 1.
\end{equation}
This follows from the fact that \eqref{doubling} implies, in particular, that for every $x\in \Rn$ and every $r\ge 1$ one has
$V(x,r) \le r^Q V(x,1)$,
where $Q = \log_2 C_D$. This estimate implies that $\int_1^\infty \frac{r}{\log V(x,r)} dr = \infty$. We can thus appeal to \cite[Theor. 4]{Sturm} to conclude that $P_t 1 = 1$ (see also \cite{Gri} for the Riemannian case).
A first basic example under which the above assumptions are satisfied is of course that of a Carnot group. In such setting, because of the presence of non-isotropic dilations the hypothesis \eqref{doubling} is trivially satisfied, whereas \eqref{poi} was established in \cite{Var}. More in general, one can consider a system $X_1,...,X_m$ of smooth vector fields satisfying H\"ormander's finite rank condition in \cite{Hor} in a bounded open set $\Om\subset \Rn$. In such situation, it is well known from the fundamental works \cite{NSW} and \cite{J} that \eqref{doubling} and \eqref{poi} hold locally. One can turn these local results into global ones by using the tools developed in \cite{BBLU}. Specifically, consider a compact set $K\supset \bar{\Omega}$ in which the vector fields $X_1,...,X_m$ are defined. It is possible to extend such system to the whole $\Rn$ in such a way that outside  $K$  the vector fields become the standard basis of $\Rn$, so that $\La=\Delta$, see \cite[Theorem 2.9]{BBLU}. Moreover, the new control distance associated with the extended vector fields satisfies \eqref{doubling} globally. As a consequence of what we have said above, the semigroup $P_t$ satisfies \eqref{stc}, see also \cite[(3.2) in Theor. 3.4]{BBLU} for a direct proof.

The semigroup $P_t$ defines a family of bounded operators $P_t : L^2(\R^n) \goto L^2(\R^n)$ having the following properties:
\begin{enumerate}
\item  $P_0 =\mathrm{Id}$ and for $s,t\geq 0,P_{s+t} = P_s\circ P_t$;
\item for $u \in L^2(\R^n)$, we have
$\|P_tu\|_{L^2(\R^n)} \ \leq\  \|u\|_{L^2(\R^n)}$;
\item for $u \in L^2(\R^n)$, the map $t \goto P_tu$ is continuous in $L^2(\R^n)$;
\item for $u, v \in L^2(\R^n)$ one has
\[
\int_{\R^n}(P_tu)\,v\,dx\ =\ \int_{\R^n}u(P_tv)\,dx.
\]
\end{enumerate}
Properties (1)-(4) above can be summarized by saying that $\{P_t\}_{t\geq 0}$ is a self-adjoint strongly continuous contraction semigroup on $L^2(\R^n)$. From the spectral decomposition, it is also easily checked that the operator $\mathscr{L}$ is furthermore the generator of this semigroup, that is for $u \in \operatorname{Dom}(\La)$ (the domain of $\mathscr{L}$), one has
\begin{equation}
\label{decayl}
\lim_{t\goto 0^+}\Big\|\frac{P_tu-u}{t}-\mathscr{L}u\Big\|_{L^2(\R^n)} = 0.
\end{equation}
This implies that for $t \geq 0, P_t \operatorname{Dom}(\La) \subset \operatorname{Dom}(\La)$, and that for $u \in \operatorname{Dom}(\La)$,
\[
\frac{d}{dt} P_tu  =  P_t\mathscr{L}u  = \mathscr{L}P_tu, 
\]
the derivative in the left-hand side of the above equality being taken in $L^2(\R^n)$. Formula \eqref{decayl} shows in particular that for any $u \in \operatorname{Dom}(\La)$, one has
\begin{equation}
\label{decay}
\|P_tu - u\|_{L^2(\R^n)}  =  O(t)\quad \text{as}\ t \goto 0^+.
\end{equation}

We next recall the fractional calculus of the operators $\La$ and $\mathscr H = \frac{\pa}{\pa t} - \La$ introduced in \cite{G}. For related developments covering a quite different class of H\"ormander type equations, we refer the reader to the recent work \cite{GT} by Tralli and the second named author.

\begin{definition}\label{fracl}
Let $s\in (0,1)$. For any $u \in \Sc(\R^n)$ we define the nonlocal operator 
\begin{eqnarray}
\label{fracls}
\Ls^{s}u(x) = -\frac{s}{\Gamma(1-s)}\int^{\infty}_0 t^{-s-1}[P_tu(x) -u(x)] dt.
\end{eqnarray}
\end{definition}

In an abstract setting formula \eqref{fracls} is due to Balakrishnan, see \cite{B1} and \cite{B2}. 
The integral in the right-hand side of \eqref{fracls} must be interpreted as a Bochner integral in $L^2(\R^n)$. We note explicitly that, in view of \eqref{decay} and of (2) above, such integral is convergent in $L^2(\R^n)$ for every $u \in \operatorname{Dom}(\La)$, and thus in particular for every $u \in \Sc(\R^n)$. We emphasise that, as observed in \cite[Lemma 8.5]{G}, Definition \ref{fracl} coincides with that proposed in \cite{FF} in the setting of Carnot groups.  
To define now the fractional power $\cH^s$, we recall the notion of evolutive semigroup  
\[
P^{\cH}_{\tau}u(x,t) = \int_{\R^n}p(x,y,\tau) u(y,t-\tau) dy,
\]
for whose properties we refer the reader to \cite{G}. Observe that when $u(x,t) = v(x)$, then $P^{\cH}_{\tau}u(x,t) = P_t v(x)$. Furthermore, $P^{\cH}_{\tau}$ is contractive in $L^2(\R^{n+1})$,
\begin{equation}\label{conL2}
\|P^{\cH}_{\tau} u\|_{L^{2}(\R^{n+1})} \leq  \|u\|_{L^{2}(\R^{n+1})},
\end{equation}
and, analogously to \eqref{decay}, for any $u\in \operatorname{Dom}(\mathscr H)$ (and thus in particular for $u \in \Sc(\R^{n+1})$), we have 
\begin{equation}\label{rateH}
\|P^{\cH}_{\tau} u-u\|_{L^2(\R^{n+1})} =  O(\tau).
\end{equation}

\begin{definition}
Given $s\in (0,1)$, and  a function $u \in \Sc(\R^{n+1})$, similarly to \eqref{fracls} we define
\begin{equation}
\label{fracsh}
\cH^s u(x,t)\overset{def}{=}  -\frac{s}{\Gamma(1-s)}\int^{\infty}_0\tau^{-s-1}\Big[P^{\cH}_{\tau}u(x,t)-u(x,t)\Big] d\tau.
\end{equation}
\end{definition}
Using \eqref{conL2} and \eqref{rateH}, it is easy to see that definition \eqref{fracsh} is well-posed in $L^2(\R^{n+1})$.  
Next, we denote by $\{E(\la)\}$ the spectral measures associated with $\La$. Let
\begin{equation}\label{spectralD}
\La = - \int^{\infty}_0\lambda dE(\lambda)
\end{equation}
denote the spectral resolution of $\La$ (see e.g. \cite[Chap. 8]{Y}, or \cite[Chap. 27]{Se} for a detailed discussion). In terms of the spectral measures the heat semigroup $\{P_t\}$ is given by
\begin{equation}\label{hs}
P_t = \int^{\infty}_0e^{-\lambda t} dE(\lambda).
\end{equation}
An expression for  the fractional powers of the sub-Laplacian in terms of this decomposition is then given by
\begin{equation}\label{fracsl}
\Ls^{s}u = \int^{\infty}_0\lambda^{s} dE(\lambda)u, \quad \text{for $u\in \Sc(\R^n)$}.
\end{equation}
The identity \eqref{fracsl} is easily verified using the well-known integral 
\[
\int_0^\infty \tau^{-s -1} (1-e^{-\tau}) d\tau = \frac{\Gamma(1-s)}{s},\ \ \ \ \ \ \ \  \ 0<s<1,
\]
which gives $- \frac{s}{\Gamma(1-s)} \int_{0}^{\infty} \frac{e^{-\lambda t} -1 }{t^{1+s}} dt = \lambda^{s}$. This formula continues to be valid for any complex number $w = \la+2\pi i \sigma\in \mathbb C$, with $\la>0$. We record this for later use
\begin{equation}\label{la}
- \frac{s}{\Gamma(1-s)} \int_{0}^{\infty} \frac{e^{-(\la+2\pi i \sigma) t} -1 }{t^{1+s}} dt = (\la+2\pi i \sigma)^{s},\ \ \ \ \ \ \ \la>0, \sigma\in \R.
\end{equation}
Using \eqref{la} with $\sigma = 0$, the spectral representation \eqref{hs}
and Balakrishnan's formula \eqref{fracls}, we find
\begin{align*}
\Ls^{s}u(x) & = -\frac{s}{\Gamma(1-s)}\int^{\infty}_0 t^{-s-1}[P_tu(x) -u(x)] dt
\\
& = -\frac{s}{\Gamma(1-s)}\int^{\infty}_0 t^{-s-1}[\int_{0}^{\infty} e^{-\lambda t} dE(\lambda) u(x) -u(x)] dt\notag
\\
& = \int_{0}^{\infty} \lambda^s d E(\lambda) u(x),
\end{align*}
where in the last equality we have exchanged the order of integration.  Now, for a function $u\in\Sc(\R^n)$ and $s \in (0,1)$ we define the norm
\begin{equation}\label{norm1}
\|u\|_{2s} \overset{def}{=} \left(\int^{\infty}_0\left(1+\lambda^{s}\right)^2 d\|E(\lambda)u\|^2\right)^{\frac{1}{2}},
\end{equation}
where we have used the fact that $E(\la)$ is idempotent and symmetric to write $(E(\lambda)u,u) = (E(\lambda)u,E(\lambda)u) = \|E(\lambda)u\|^2$. We then let
\begin{equation}
\label{spacew}
W^{2s} = W^{2s}(\R^n) \overset{def}{=} \overline{\Sc(\R^{n})}^{\|\cdot\|_{2s}}.
\end{equation}
From \eqref{fracsl} we thus see that the definition of $\Ls^s$ is extendible to functions in  $W^{2s}$. We stress  that in the classical setting when $\La=\Delta$, the space $W^{2s}$ coincides with the fractional Sobolev space  of order $2s$  defined by the norm
\begin{equation}\label{ed}
\bigg(\int_{\R^n} \left( u^2 + ((-\Delta)^s u)^2  \right) \bigg)^{1/2}.
\end{equation}
This is easily seen from \eqref{fracsl} since in terms of the spectral resolution of $-\Delta$, we have that 
\[
\int  u^2 +  ((-\Delta)^s u)^2 = \int_0^{\infty} (1+ \lambda^{2s}) d \|E(\lambda)u\|^2 \cong \int^{\infty}_0\left(1+\lambda^{s}\right)^2 d\|E(\lambda)u\|^2.
\]
Thus, the norm in \eqref{ed} is finite if and only if the corresponding norm in \eqref{norm1} is finite.
 We also mention that in the classical case when $0< s< 1/2$,  the space  $W^{2s}$ coincides with the fractional Aronszajn-Gagliardo-Slobedetzky space of the functions $f\in L^2(\Rn)$ such that \[\int_{\Rn} \int_{\Rn} \frac{|f(x) - f(y)|^2}{|x-y|^{n+4s}} dx dy<\infty.\]

Now for a function $f\in L^1(\R)$ we denote its Fourier transform by
\[
\hat{f}(\sigma) = \int_{\R}e^{-2\pi i\sigma t}f(t) dt, \quad \sigma\in\R.
\]
Let $u\in \Sc(\R^{n+1})$, then  by taking the Fourier transform with respect to the time variable, we easily find
\begin{equation}\label{ptauH}
\widehat{P^{\cH}_{\tau} u}(x,\sigma) = e^{-2\pi i \tau\sigma} P_\tau(\hat u(\cdot,\sigma))(x).
\end{equation}
From this observation, in view of \eqref{la},  we obtain
\begin{eqnarray}\label{he}
\widehat{\cH^{s}u}(x,\sigma)& = & -\frac{s}{\Gamma(1-s)}\int^{\infty}_0\frac{1}{\tau^{1+s}}\Big[\int^{\infty}_0\left(e^{-\left(\lambda+2\pi i \sigma\right) \tau}-1\right) dE(\lambda)\hat{u}(\cdot,\sigma)\Big] d\tau\\
& = & \int^{\infty}_0\left(\lambda+2\pi i \sigma\right)^{s} dE(\lambda)\hat{u}(\cdot,\sigma).
\notag
\end{eqnarray}
Hence,
\begin{equation}\label{hes}
\|\widehat{\cH^{s}u}(\cdot,\sigma)\|^2_{L^2(\R^n)} = \int^{\infty}_0|\lambda+2\pi i \sigma|^{2s} d\|E(\lambda)\hat{u}(\cdot,\sigma)\|^2.
\end{equation}
Similarly to the stationary case, for  a function $u\in\mathscr{S}(\R^{n+1})$ and $s \in (0,1)$ we  now define the following norm
\begin{equation}\label{H2s}
\|u\|_{\mathcal{H}^{2s}}  \overset{def}{=} \left(\int^{\infty}_{-\infty}\int^{\infty}_0\left(1+|\lambda+2\pi i \sigma|^{s}\right)^2 d\|E(\lambda)\hat{u}(\cdot,\sigma)\|^2 d\sigma\right)^{\frac{1}{2}}.
\end{equation}
We then let
\begin{equation}
\label{spaceh}
H^{2s} = \overline{\Sc(\R^{n+1})}^{\|\cdot\|_{\mathcal{H}^{2s}}}. 
\end{equation}
In view of the computation in \eqref{he}, we see that  the definition of $\cH^s$ is extendible to functions in  $H^{2s}$. As above, it can be seen that in the classical setting when $\cH=\p_t - \Delta$, the space $H^{2s}$ coincides with the space $\operatorname{Dom}(\cH^s)$ adopted in \cite{ST} and \cite{BaG}. 


\section{Extension problem for $(-\La)^s$ and $\cH^s$}\label{et1}
In this section, given $s\in (0,1)$, we indicate with $a = 1-2s$. Note that $-1<a<1$, and that $a = 0$ when $s = 1/2$. To discuss  the extension problem for $(-\La)^s$ and $\cH^s$, following \cite{G} we begin with that for $\cH^s$, and then use Bochner subordination for the time-independent case. We will use the letter $z$ for the so-called extension variable, and denote by $\B_a = \frac{\partial^2}{\partial z^2}+\frac{a}{z}\frac{\partial}{\partial z}$ the Bessel operator on the half-line $\{z>0\}$. Given the heat operator $\cH = \p_t - \La$ with respect to the variables $(x,t)\in \R^{n+1}$, the extension problem for $\cH^s$ consists in solving the ``Dirichlet problem" in $\R^{n+2}_+$
\begin{equation}\label{ep}
\begin{cases}
\cH_aV \overset{def}{=} z^{a}\left(\cH V+\B_aV\right) = 0,
\\
V(x,0,t) = u(x,t).
\end{cases}
\end{equation}
We note that the  operator $\cH_a$ can be written as a degenerate parabolic operator of the form
\begin{equation}
\label{extdiv}
\cH_a = \omega(x,z)\frac{\pa}{\pa t}+\sum^{m+1}_{i=1}\tilde{X}^{\star}_i\left(\omega(x,z)\tilde{X}_i\right),
\end{equation}
where 
\begin{equation}\label{tildes}
\tilde{X}_1 = X_1,\ ...\ , \tilde{X}_m = X_m,\ \tilde{X}_{m+1} = \frac{\pa}{\pa z}, \quad \text{and}\quad \omega(x,z) = |z|^a.
\end{equation}
Given an open set $\Sigma \subset \Rn\times \R_+ \times \R$, the Sobolev space $\cL^{1,2}(\Sigma, z^a dx dz dt)$ is defined as the collection of all functions $U\in L^{2}(\Sigma, z^a dx dz dt)$ such that $XU, \p_z U\in L^{2}(\Sigma, z^a dx dz dt)$. Such space is endowed with its natural norm
\begin{equation}\label{ss}
||U||_{\cL^{1,2}(\Sigma, z^a dx dz dt)} = \bigg(\int_{\Sigma} \bigg(|U|^2 + |XU|^2 + |\p_z U|^2\bigg)z^a dx dz dt\bigg)^{1/2}.
\end{equation}
We now introduce the relevant notion of weak solution.

\begin{definition}\label{wk0}
Given $ \Sigma= B(x, r) \times (0, M) \times (T_1, T_2)$, a function $W \in \cL^{1,2}( \Sigma, z^a dx dz dt)$ is said to be a weak solution in $\Sigma$ of
\begin{equation}
\begin{cases}
\cH_a W=0.
\\
\underset{z \to 0^+}{\lim} z^a \partial_z W= \psi,\quad \text{ for $\psi \in L^{2} (B(x,r) \times (T_1, T_2), dxdt)$}
\end{cases}
\end{equation}
if for every $\phi \in C^{\infty}( B(x,r) \times [0, M] \times [T_1, T_2])$ with compact support in $B(x,r) \times [0, M) \times [T_1, T_2]$, and almost every $t_1 , t_2$ such that $T_1 <t_1 <t_2<T_2$, we have
 \begin{align}\label{d8}
& \int_{t_1}^{t_2} \left(\int_{B(x,r) \times (0, M)} \sum_{i=1}^{m+1}  \tilde X_i W \tilde X_i\phi |z|^a dxdz\right) dt= \int_{t_1}^{t_2} \left(\int_{B(x,r) \times (0, M)} |z|^a \phi_t W dxdz\right) dt
\\
& - \int_{B(x,r) \times (0, M)} \phi (\cdot,t_2) W (\cdot, t_2) |z|^a dxdz   + \int_{B(x,r) \times (0, M)} \phi (\cdot,t_1) W (\cdot,t_1) |z|^a  dxdz
\notag
\\
& - \int_{t_1}^{t_2} \int_{B(x,r) \times \{0\}}   \psi \phi dx dt.
\notag
 \end{align}
 \end{definition}

Hereafter, $p(x,y,t)$ indicates the heat kernel in \eqref{pt}. The following \emph{Poisson kernel} for the problem \eqref{ep} was introduced in \cite{G}:
\begin{equation}\label{extKernel}
P^{(a)}_z(x,y,t)\overset{def}{=} \frac{1}{2^{1-a}\Gamma\left(\frac{1-a}{2}\right)}\frac{z^{1-a}}{t^{\frac{3-a}{2}}}\,e^{-\frac{z^2}{4t}}\,p(x,y,t).
\end{equation}
For the main properties of the function $P^{(a)}_z(x,y,t)$, see \cite[Prop. 9.5, 9.6, 10.3, 10.4]{G}. Here, we reproduce the one that represents a key ingredient in solving \eqref{ep}.

\begin{proposition}\label{G-prop 9.6}
For every $x, y \in \R^n$ with $x \neq y$ and $t > 0$ one has
\[
\partial_t P^{(a)}_z(x,y,t) - \B_a P^{(a)}_z(x,y,t) = \La_x P^{(a)}_z(x,y,t).
\]
\end{proposition}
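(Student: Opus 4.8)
The plan is to verify the identity in Proposition~\ref{G-prop 9.6} by direct computation, exploiting the product structure of the Poisson kernel $P^{(a)}_z(x,y,t) = c_a\, z^{1-a} t^{-(3-a)/2} e^{-z^2/(4t)} p(x,y,t)$. First I would isolate the two factors: the ``Bessel--heat'' part $G_z(t) \overset{def}{=} z^{1-a} t^{-(3-a)/2} e^{-z^2/(4t)}$, depending only on $(z,t)$, and the heat kernel $p(x,y,t)$, depending on $(x,y,t)$. Since the operators $\B_a$ and $\La_x$ act only on $z$ and $x$ respectively, while $\p_t$ hits everything, the Leibniz rule gives
\begin{align*}
\p_t\big(G_z(t)\,p(x,y,t)\big) - \B_a\big(G_z(t)\,p(x,y,t)\big) - \La_x\big(G_z(t)\,p(x,y,t)\big) &= \big(\p_t G_z - \B_a G_z\big)\,p \\
&\quad + G_z\,\big(\p_t p - \La_x p\big).
\end{align*}
The second term on the right vanishes identically because $p(x,y,t)$ is the heat kernel for $\La$, i.e. $\cH p = (\p_t - \La_x)p = 0$ for $x\neq y$, $t>0$, which follows from the semigroup properties recalled in Section~\ref{pl} (in particular \eqref{decayl} and the smoothness of $p$ away from the diagonal, guaranteed by the Gaussian bounds and hypoellipticity). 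So the proposition reduces to the purely one-dimensional-in-$z$ (and scalar-in-$t$) claim that $\p_t G_z(t) = \B_a G_z(t)$.

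Next I would carry out that remaining computation explicitly. Writing $G_z(t) = z^{1-a} t^{-(3-a)/2} e^{-z^2/(4t)}$, one computes $\p_t G_z$ using $\p_t\big(t^{-(3-a)/2}e^{-z^2/(4t)}\big) = t^{-(3-a)/2}e^{-z^2/(4t)}\big(-\tfrac{3-a}{2}t^{-1} + \tfrac{z^2}{4}t^{-2}\big)$. On the other side, $\B_a G_z = \p_z^2 G_z + \tfrac{a}{z}\p_z G_z$; here it is convenient to factor $G_z(t) = z^{1-a}\,H(z,t)$ with $H(z,t) = t^{-(3-a)/2}e^{-z^2/(4t)}$, use the fact that $z^{1-a}$ is (up to constants) the natural weight making $\B_a$ self-adjoint, and compute $\p_z H = -\tfrac{z}{2t}H$, $\p_z^2 H = \big(-\tfrac{1}{2t} + \tfrac{z^2}{4t^2}\big)H$. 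Substituting and collecting the powers of $z$ and $t$, the two expressions $\p_t G_z$ and $\B_a G_z$ should match term by term; the exponent $(3-a)/2$ and the coefficient $a$ in the Bessel operator are precisely calibrated so that this happens. (Equivalently, one recognizes $G_z(t)$ as, up to normalization, the fundamental solution in $t$ of the Bessel heat equation $\p_t = \B_a$ on the half-line with the Neumann-type weight $z^a$, which is the origin of the kernel \eqref{extKernel} via Bochner subordination; but for a self-contained proof the direct differentiation is cleanest.)

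The only genuine subtlety — and the one step I would be careful about — is the justification that $\cH_x p = 0$ in the pointwise/classical sense for $x\neq y$, $t>0$, rather than merely in the $L^2$ sense implied by \eqref{decayl}. This is where one invokes the hypoellipticity of $\cH$ together with the Gaussian upper bounds from Section~\ref{pl}: these guarantee that $p(x,y,\cdot)$ and its $X$- and $t$-derivatives are smooth and locally bounded away from the diagonal, so that the semigroup identity $\tfrac{d}{dt}P_t f = \La P_t f$ localizes to the pointwise statement $(\p_t - \La_x)p(x,y,t)=0$. Once that is in hand, the rest is the routine Leibniz-rule bookkeeping and the elementary one-variable differentiation sketched above, with no further obstacle. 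I would present the argument in exactly this order: (1) split via Leibniz, (2) kill the heat-kernel term using $\cH p = 0$, (3) reduce to $\p_t G_z = \B_a G_z$, (4) verify the latter by direct computation.
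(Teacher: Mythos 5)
Your computation is correct and is essentially the intended argument: the paper itself offers no proof of this proposition, simply reproducing the statement from \cite[Prop. 9.6]{G}, and the verification there is exactly your route — the Leibniz splitting $P^{(a)}_z=c_a\,G_z(t)\,p(x,y,t)$, the pointwise heat equation $(\partial_t-\La_x)p=0$ away from the diagonal (legitimate by hypoellipticity and the Gaussian bounds, as you note), and the direct check that $\partial_t G_z=\B_a G_z$, which indeed closes with the common factor $-\tfrac{3-a}{2t}+\tfrac{z^2}{4t^2}$ on both sides. One harmless slip in a side remark: the weight making $\B_a$ symmetric is $z^{a}$, not $z^{1-a}$ (the latter is rather the nontrivial homogeneous solution of $\B_a u=0$), but this plays no role in your calculation.
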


We are ready to prove one of the central results in this note.

\begin{proof}[Proof of Theorem \ref{wk2}]  
Differentiating under the integral sign and using Proposition \ref{G-prop 9.6}  it is easily seen that $V$ satisfies the partial differential equation in \eqref{ep}. We thus turn to proving \eqref{tr2}. Taking the Fourier transform of $V$ with respect to the variable $t$, and using \eqref{hs}, \eqref{ptauH} and the spectral decomposition for $-\La$, we have the following alternative  representation for $V$, 
\begin{eqnarray}\label{ftv}
\widehat{V}(x,z,\sigma)& = &\frac{z^{1-a}}{2^{1-a}\Gamma\left(\frac{1-a}{2}\right)}\int^{\infty}_0\frac{e^{-\frac{z^2}{4\tau}}}{\tau^{\frac{3-a}{2}}}e^{-2\pi i\tau \sigma}P_{\tau}(\hat{u}(\cdot,\sigma))(x) d\tau\\
& = &\frac{z^{1-a}}{2^{1-a}\Gamma\left(\frac{1-a}{2}\right)}\int^{\infty}_0\big(\int^{\infty}_0\frac{e^{-\frac{z^2}{4\tau}}}{\tau^{\frac{3-a}{2}}}e^{-\big(\lambda+2\pi i\sigma\big)\tau} d\tau\big) dE(\lambda)(\hat{u}(\cdot,\sigma))(x).
\notag
\end{eqnarray}
Using Plancherel theorem in the $t$ variable we note that, 
\[
\|V(\cdot,z,\cdot)-u\|_{L^{2}\left(\R^{n+1}\right)} = \|\hat{V}(\cdot,z,\cdot)-\hat{u}\|_{L^{2}\left(\R^{n+1}\right)}.
\]
Since  for every $z>0$,
\begin{equation}\label{ident}
\frac{1}{2^{1-a}\Gamma\left(\frac{1-a}{2}\right)}z^{1-a}\int^{\infty}_0\frac{1}{t^{\frac{3-a}{2}}}e^{-\frac{z^2}{4t}} dt = 1,
\end{equation}
from \eqref{ftv} we infer that the following important identity 
\begin{align}\label{difvu}
& \hat{V}(x,z,\sigma)-\hat{u}(x,\sigma)
\\
& = \frac{z^{1-a}}{2^{1-a}\Gamma\big(\frac{1-a}{2}\big)}\int^{\infty}_0\big(\int^{\infty}_0\frac{e^{-\frac{z^2}{4\tau}}}{\tau^{\frac{3-a}{2}}}\big[e^{-\big(\lambda+2\pi i\sigma\big)\tau}-1\big] d\tau\big) dE(\lambda)(\hat{u}(\cdot,\sigma))(x).
\notag
\end{align}
Then,
\begin{align}\label{lh1}
& \|\hat{V}(\cdot,z,\cdot)-\hat{u}\|^2_{L^2(\R^{n+1})} 
\\
& = C(a)z^{2(1-a)}\int_{\R}\int_{0}^{\infty}\Big|\int^{\infty}_0\frac{e^{-\frac{z^2}{4\tau}}}{\tau^{\frac{3-a}{2}}}\Big[e^{-\left(\lambda+2\pi i\sigma\right)\tau}-1\Big]d\tau\Big|^2 d\|E(\lambda)\hat{u}(\cdot,\sigma)\|^2 d\sigma
\notag\\
& = C(a)z^{2(1-a)}\int_\R\int_{0}^{\infty}\Big|\int^{\infty}_0\frac{e^{-(\lambda+2\pi i\sigma)\frac{z^2}{4\theta}}}{\theta^{\frac{3-a}{2}}}\Big[e^{-\theta}-1\Big]d\theta\Big|^2|\lambda+2\pi i\sigma|^{1-a} d\|E(\lambda)\hat{u}(\cdot,\sigma)\|^2 d\sigma\notag\\
& \leq  C(a)z^{2(1-a)}\int_\R \int_{0}^{\infty}\left(\int^{\infty}_0\frac{1}{\theta^{\frac{3-a}{2}}}\Big[1-e^{-\theta}\Big]d\theta\right)^2|\lambda+2\pi i\sigma|^{1-a}d\|E(\lambda)\hat{u}(\cdot,\sigma)\|^2d\sigma\notag\\
& = C(a)z^{2(1-a)}\int_\R\int_{0}^{\infty}|\lambda+2\pi i\sigma|^{1-a} d\|E(\lambda)\hat{u}(\cdot,\sigma)\|^2d\sigma\notag.
\end{align}
We note that the second equality in \eqref{lh1} can be justified
by applying, for every $z>0$, $\la>0$ and $\sigma\in \R$ fixed, Cauchy's integral formula to the holomorphic function in the half-plane $\Re w>0$, 
 \[
f(w) =  \frac{e^{-\frac{z^2(\lambda + 2\pi i \sigma)}{4w}}(e^{-w}-1)}{w^{\frac{3-a}{2}}},
\]
and to the counterclockwise oriented contour $\Gamma_{\ve,R}$ formed by the two rays $\{(\lambda + 2\pi i \sigma )\tau \mid \tau>0\}$,  and $\{\Im w=0, \Re w>0\}$, and the portion of the circles $\{|w|^2 =R^2\}$ and $\{|w|^2 =\ve^2\}$ connecting them. Then, we let $\ve\to 0^+$ and $R \to \infty$. Noting that the integrals of $f(w)$ along the circular arcs of $\Gamma_{\ve,R}$ tend to zero as $\ve\to 0^+$ and $R\to \infty$, we conclude that 
\[
\int^{\infty}_0\frac{e^{-\frac{z^2}{4\tau}}}{\tau^{\frac{3-a}{2}}}\Big[e^{-\left(\lambda+2\pi i\sigma\right)\tau}-1\Big]d\tau = (\lambda+2\pi i\sigma)^{\frac{1-a}2} \int^{\infty}_0\frac{e^{-(\lambda+2\pi i\sigma)\frac{z^2}{4\theta}}}{\theta^{\frac{3-a}{2}}}\Big[e^{-\theta}-1\Big]d\theta, 
\]
which justifies the second equality in \eqref{lh1}.
 Consequently, keeping in mind that $1-a=2s$, from \eqref{lh1} we obtain
\begin{align*}
\|\hat{V}(\cdot,z,\cdot)-\hat{u}\|^2_{L^2(\R^{n+1})} 
& \leq  C(a)z^{2(1-a)}\int_\R \int^{\infty}_0|\lambda+2\pi i\sigma|^{2s} d\|E(\lambda)\hat{u}(\cdot,\sigma)\|^2 d\sigma 
\\
& =  C(a)z^{2(1-a)} \|\widehat{\cH^{s}u}(\cdot,\sigma)\|^2_{L^2(\R^n)},
\end{align*}
where in the last equality we have used \eqref{hes}. We infer that $V$ converges to $u$ in $L^2$ as $z\goto 0^+$.

We next prove \eqref{dn2}. Differentiating  \eqref{difvu} with respect to $z$, we find
\begin{align}\label{gud}
& \frac{\p \hat{V}}{\p z}(x,z,\sigma)
\\
& = \frac{z^{1-a}}{2^{1-a}\Gamma\big(\frac{1-a}{2}\big)}\int^{\infty}_0\big(\int^{\infty}_0\big(\frac{1-a}z - \frac{z}{2\tau}\big) \frac{e^{-\frac{z^2}{4\tau}}}{\tau^{\frac{3-a}{2}}}\big[e^{-\big(\lambda+2\pi i\sigma\big)\tau}-1\big] d\tau\big) dE(\lambda)(\hat{u}(\cdot,\sigma))(x).
\notag
\end{align}
Now we make the crucial observation that 
\begin{equation}\label{observe}
- \frac{d}{d\tau}\left(\frac{e^{-\frac{z^2}{4\tau}}}{\tau^{\frac{1-a}{2}}} \right)=  \frac z2 \frac{e^{-\frac{z^2}{4\tau}}}{\tau^{\frac{3-a}{2}}}\left(\frac{1-a}{z}-\frac{z}{2\tau}\right).
\end{equation}
Using \eqref{observe} in \eqref{gud}, we obtain the second basic identity
\begin{align}\label{verygud}
& \frac{\p \hat{V}}{\p z}(x,z,\sigma)
\\
& = - \frac{z^{-a}}{2^{-a}\Gamma\big(\frac{1-a}{2}\big)}\int^{\infty}_0\big(\int^{\infty}_0 \frac{d}{d\tau}\big(\frac{e^{-\frac{z^2}{4\tau}}}{\tau^{\frac{1-a}{2}}} \big) \big[e^{-\big(\lambda+2\pi i\sigma\big)\tau}-1\big] d\tau\big) dE(\lambda)(\hat{u}(\cdot,\sigma))(x)
\notag\\
& = \frac{z^{-a}}{2^{-a}\Gamma(\frac{1-a}{2})}\int^{\infty}_0\frac{e^{-\frac{z^2}{4\tau}}}{\tau^{\frac{1-a}{2}}}\,\frac{d}{d\tau}\left(\int^{\infty}_0[e^{-(\lambda+2\pi i \sigma)\tau}-1]\,dE(\lambda)(\hat{u}(\cdot,\sigma))(x)\right) d\tau\nonumber\\
& =-\frac{z^{-a}}{2^{-a}\Gamma(\frac{1-a}{2})}\int^{\infty}_0\frac{e^{-\frac{z^2}{4\tau}}}{\tau^{\frac{1-a}{2}}}\left(\int^{\infty}_0e^{-(\lambda+2\pi i \sigma)\tau}(\lambda+2\pi i \sigma) dE(\lambda)(\hat{u}(\cdot,\sigma))(x)\right) d\tau\nonumber\\
&  =-\frac{z^{-a}}{2^{-a}\Gamma(\frac{1-a}{2})}\int^{\infty}_0 (\lambda+2\pi i \sigma) \bigg(\int^{\infty}_0 \frac{e^{-\frac{z^2}{4\tau}}}{\tau^{\frac{1-a}{2}}}e^{-(\lambda+2\pi i \sigma)\tau} d\tau\bigg)  dE(\lambda)(\hat{u}(\cdot,\sigma))(x) \nonumber\\
& = -\frac{z^{-a}}{2^{-a}\Gamma(\frac{1-a}{2})}\int^{\infty}_0\left(\lambda+2\pi i \sigma\right)^{\frac{1-a}{2}}\left(\int^{\infty}_0\frac{e^{-\theta}e^{-(\lambda+2\pi i \sigma)\frac{z^2}{4\theta}}}{\theta^{\frac{1-a}{2}}} d\theta\right) dE(\lambda)(\hat{u}(\cdot,\sigma))(x).
\notag
\end{align}
Note that the last equality can be justified by applying Cauchy's integral theorem to the function 
\[
g(w) = \frac{e^{-\frac{z^2}{4w}(\la + 2\pi i \sigma)} e^{-w}}{w^{\frac{1-a}2}},
\]
and to the same path $\Gamma_{\ve,R}$ as in the proof of \eqref{lh1}. From \eqref{he} and \eqref{verygud} we thus find
\begin{eqnarray*}
  &&\left\|\frac{2^{-a}\Gamma\left(\frac{1-a}{2}\right)}{\Gamma\left(\frac{1+a}{2}\right)}z^a\frac{\pa \hat{V}}{\pa z}(\cdot,z,\cdot) + \widehat{\cH^{s}u}\right\|^2_{L^{2}\left(\R^{n+1}\right)}\\
  &=& \int_{\R}\int^{\infty}_0\left(\lambda+2\pi i\sigma\right)^{2s}\left(\frac{1}{\Gamma\left(\frac{1+a}{2}\right)}\int^{\infty}_0\frac{e^{-\theta}e^{-(\lambda+2\pi i \sigma)\frac{z^2}{4\theta}}}{\theta^{\frac{1-a}{2}}} d\theta-1\right)^2d\|E(\lambda)\hat{u}(\cdot,\sigma)\|^2d\sigma,
\end{eqnarray*}
and the right-hand side of the latter equality tends to zero as $z\to 0^+$ in view of Lebesgue dominated convergence and of the equation
$\frac{1}{\Gamma\left(\frac{1+a}{2}\right)}\int^{\infty}_0\frac{e^{-\theta}}{\theta^{\frac{1-a}{2}}} d\theta=1$.
By Plancherel theorem in $t$ variable we thus finally obtain 
\begin{align*}
& \lim_{z\rightarrow 0^+} \left\|\frac{2^{-a}\Gamma\left(\frac{1-a}{2}\right)}{\Gamma\left(\frac{1+a}{2}\right)}z^a\frac{\pa V}{\pa z}(\cdot,z,\cdot) + \cH^{s}u\right\|_{L^{2}\left(\R^{n+1}\right)} 
\\
& = \lim_{z\rightarrow 0^+}\left\|\frac{2^{-a}\Gamma\left(\frac{1-a}{2}\right)}{\Gamma\left(\frac{1+a}{2}\right)}z^a\frac{\pa \hat{V}}{\pa z}(\cdot,z,\cdot) + \widehat{\cH^{s}u}\right\|^2_{L^{2}\left(\R^{n+1}\right)} = 0,
\end{align*}
which gives the desired conclusion \eqref{dn2}.

We now prove our final claim \eqref{en2}.  First, we note from \eqref{extKernel} and \eqref{v} that
\begin{align}\label{to*}
&\bigg( \int_{\R} \int_{\R^n} V(x, z, t)^2 dx dt\bigg)^{1/2}
\\
& =  C(a) z^{2s} \bigg(\int_{\R} \int_{\R^n}  \left( \int_{0}^{\infty} \int_{\R^n}\frac{e^{-\frac{z^2}{4\tau}}}{\tau^{1+s}} p(x, y, \tau)u(y, t-\tau) dy d\tau \right)^2 dx dt\bigg)^{1/2}
\notag
\\
&  \leq  C(a) z^{2s}  \int_{0}^{\infty} \bigg(\int_{\R^n} \int_{\R} \int_{\R^n} p(x,y,\tau) u(y, t-\tau)^2  dxdt dy \bigg)^{1/2} \frac{e^{-\frac{z^2}{4\tau}}}{\tau^{1+s}} d\tau
\notag
\\
&  \leq   C(s) ||u||_{L^{2}(\R^{n+1})}.
\notag
\end{align}
Note that  the first inequality in \eqref{to*} follows by an application of  Minkowski and Jensen inequalities,  and the second one  follows from the fact that 
\[
\int_{\R^n} p(x, y,\tau) dx = 1,\quad \text{and}\  \int_{0}^{\infty}  \frac{z^{2s}}{\tau^{1+s}}  e^{-\frac{z^2}{4\tau}} d\tau =  4^s \Gamma(s).
\]
For $M>0$ let $\Sigma= \R^n \times (0, M) \times \R$. Since $a \in (-1, 1)$, by Fubini's theorem  we  easily obtain from \eqref{to*}
\begin{equation}\label{to1}
\|V\|_{L^2(\Sigma, z^a dx dz dt)} \leq  C(a, M) \|u\|_{L^2(\R^{n+1})}.
\end{equation}
Keeping \eqref{verygud} in mind, we find next
\begin{eqnarray}\label{ndvdz1}
&&\Big\|\frac{\pa \hat{V}}{\pa z}(\cdot,z,\cdot)\Big\|^2_{L^2(\R^n)}\\
& = &C(a)z^{-2a}\int^{\infty}_0|\lambda+2\pi i \sigma|^{1-a}\big|\int^{\infty}_0\frac{e^{-\theta}e^{-(\lambda+2\pi i \sigma)\frac{z^2}{4\theta}}}{\theta^{\frac{1-a}{2}}} d\theta\big|^2 d\|E(\lambda)\hat{u}(\cdot,\sigma)\|^2\nonumber.
\end{eqnarray}
From \eqref{ndvdz1} we thus obtain,
\begin{align}\label{tuo}
&\Big\|\frac{\pa \hat{V}}{\pa z}(\cdot,z,\cdot)\Big\|^2_{L^2(\Sigma, z^a \,dz\, dx\, dt)}\\
&\leq\  C(a, M)\int^{\infty}_{-\infty}\int^{\infty}_0|\lambda+2\pi i \sigma|^{2s}\,d\|E(\lambda)\hat{u}(\cdot,\sigma)\|^2\,d\sigma \leq C(a, M) ||u||_{\mathcal{H}^{2s}},\notag
\end{align} 
where in the last inequality we have used \eqref{H2s}. 
We now note that  for an arbitrary function $f$, appropriately decaying at infinity in $\R^{n}$, an integration by parts gives  
\[
\|Xf\|_{L^2(\R^n)} = \|\Ls^{\frac{1}{2}}f\|_{L^2(\R^n)}.
\]
Combining this observation with Plancherel theorem in the $t$ variable, we find  
\begin{equation}\label{equi1}
\|XV(\cdot,z,\cdot)\|_{L^2(\R^n \times \R)} =  \|\Ls^{\frac{1}{2}}\widehat{V}(\cdot,z,\cdot)\|_{L^2(\R^n \times \R)}.
\end{equation}
Using \eqref{fracsl} with $s = 1/2$, and the representation \eqref{ftv}, we obtain
\begin{align*}
&\|\Ls^{\frac{1}{2}}\widehat{V}(\cdot,z,\sigma)\|^2_{L^2(\R^n)}
\leq C(a)z^{2(1-a)}\int^{1}_0\lambda\bigg|\int^{\infty}_0\frac{e^{-\frac{z^2}{4\tau}}}{\tau^{\frac{3-a}{2}}}e^{-(\lambda+2\pi i \sigma)\tau} d\tau\bigg|^2 d\|E(\lambda)\hat{u}(\cdot,\sigma)\|^2
\\
&+C(a)z^{2(1-a)}\int^{\infty}_1\lambda\bigg|\int^{\infty}_0\frac{e^{-\frac{z^2}{4\tau}}}{\tau^{\frac{3-a}{2}}}e^{-(\lambda+2\pi i \sigma)\tau} d\tau \bigg|^2 d\|E(\lambda)\hat{u}(\cdot,\sigma)\|^2 = I(z,\sigma) + II(z,\sigma).
\end{align*}
We now estimate
\begin{align*}
I(z,\sigma) & \le C(a)z^{2(1-a)}\int^{1}_0\lambda\bigg(\int^{\infty}_0\frac{e^{-\frac{z^2}{4\tau}}}{\tau^{\frac{1-a}{2}}}e^{-\lambda\tau} \frac{d\tau}\tau \bigg)^2 d\|E(\lambda)\hat{u}(\cdot,\sigma)\|^2
\\
& \le C(a)z^{2(1-a)}\int^{1}_0 \bigg(\int^{\infty}_0\frac{e^{-\frac{z^2}{4\tau}}}{\tau^{\frac{1-a}{2}}} \frac{d\tau}\tau \bigg)^2 d\|E(\lambda)\hat{u}(\cdot,\sigma)\|^2
\\
& \le C(a)z^{2(1-a)} \bigg(\int^{\infty}_0\frac{e^{-\frac{z^2}{4\tau}}}{\tau^{\frac{1-a}{2}}} \frac{d\tau}\tau \bigg)^2 \int^{\infty}_0  d\|E(\lambda)\hat{u}(\cdot,\sigma)\|^2
\\
& = C'(a)\ \|\hat{u}(\cdot,\sigma)\|^2_{L^2(\Rn)},
\end{align*}
where the last equality is justified by the change of variable $\rho = \frac{z^2}{4\tau}$, and by the fact that $\Gamma(\frac{1-a}2) = \int_0^\infty \rho^{\frac{1-a}2} \frac{d\rho}\rho$. Next, an application of Cauchy's integral formula as in \eqref{lh1}, \eqref{verygud}, gives
\begin{align*}
II(z,\sigma) & = C(a)z^{2(1-a)}\int^{\infty}_1\lambda|\lambda+2\pi i \sigma|^{1-a}\bigg|\int^{\infty}_0\frac{e^{-(\lambda+2\pi i \sigma)\frac{z^2}{4\theta}}}{\theta^{\frac{3-a}{2}}}e^{-\theta} d\theta\bigg|^2 d\|E(\lambda)\hat{u}(\cdot,\sigma)\|^2
\\
& \le C(a)z^{2(1-a)}\int^{\infty}_1\lambda|\lambda+2\pi i \sigma|^{1-a}\left(\int^{\infty}_0\frac{e^{-\frac{\lambda z^2}{4\theta}}}{\theta^{\frac{1-a}{2}}}e^{-\theta} \frac{d\theta}\theta\right)^2 d\|E(\lambda)\hat{u}(\cdot,\sigma)\|^2.
\end{align*}
We now use the following formula that can be found in 9.   on p. 340 of \cite{GR}
\[
\int_0^\infty t^{\nu-1} e^{-(\frac{\beta}{t} + \gamma t)} dt = 2 \left(\frac \beta{\gamma}\right)^{\frac \nu{2}} K_\nu(2 \sqrt{\beta \gamma}),
\]
provided $\Re \beta, \Re \gamma >0$. Applying it with
$\nu = - \frac{1-a}2$, $\beta = \frac{\la z^2}4$ and $\gamma = 1$,
and keeping in mind that $K_\nu = K_{-\nu}$ (see 5.7.10 in \cite{Le}), we find
\[
\bigg(\int^{\infty}_0\frac{e^{-\frac{\lambda z^2}{4\theta}}}{\theta^{\frac{1-a}{2}}}e^{-\theta} \frac{d\theta}\theta\bigg)^2 = 4^{2-a} z^{-2(1-a)}\la^{-1+a} K_{\frac{1-a}2}(z\sqrt \la)^2.
\]
This gives
\begin{align*}
II(z,\sigma) \le C'(a) \int^{\infty}_1\lambda^a |\lambda+2\pi i \sigma|^{1-a} K_{\frac{1-a}2}(z\sqrt \la)^2 d\|E(\lambda)\hat{u}(\cdot,\sigma)\|^2.
\end{align*}
Combining estimates, we thus obtain 
\begin{align*}
& \|\Ls^{\frac{1}{2}}\widehat{V}(\cdot,z,\cdot)\|^2_{L^2(\Sigma, z^a dz dx dt)} \le C(a,M)\ \|\hat{u}\|^2_{L^2(\R^{n+1})}
\\
& + C'(a) \int^{\infty}_{-\infty}\int^{\infty}_1\la^a |\lambda+2\pi i \sigma|^{1-a}\int^{M}_0  K_{\frac{1-a}2}(z\sqrt \la)^2 z^a dz d\|E(\lambda)\hat{u}(\cdot,\sigma)\|^2 d\sigma.
\end{align*}
Observe now that, in view of the asymptotic behaviour of  $K_{\frac{1-a}2}(\rho)^2 $, see e.g. (5.11.9) in \cite{Le}, we have
\[
\int^{M}_0  K_{\frac{1-a}2}(z\sqrt \la)^2 z^a dz = \la^{-\frac{a+1}2} \int_0^{M \sqrt \la} K_{\frac{1-a}2}(\rho)^2 \rho^a d\rho \le C''(a) \la^{-\frac{a+1}2}.
\]
Substituting this information in the above inequality, we thus find
\begin{align*}
& \|\Ls^{\frac{1}{2}}\widehat{V}(\cdot,z,\cdot)\|^2_{L^2(\Sigma, z^a dz dx dt)} \le C(a,M)\ \|\hat{u}\|^2_{L^2(\R^{n+1})}
\\
& + C'''(a) \int^{\infty}_{-\infty}\int^{\infty}_1\la^{\frac{a-1}2} |\lambda+2\pi i \sigma|^{1-a} d\|E(\lambda)\hat{u}(\cdot,\sigma)\|^2 d\sigma
\\
& \le C'''(a) \int^{\infty}_{-\infty}\int^{\infty}_1|\lambda+2\pi i \sigma|^{1-a} d\|E(\lambda)\hat{u}(\cdot,\sigma)\|^2 d\sigma \le \tilde C(a) ||u||_{\mathcal H^{2s}},
\end{align*}
where in the last inequality we have used \eqref{H2s}
 and the fact that $1-a = 2s$.
Finally,  by taking \eqref{equi1} into account, we  obtain 
\begin{equation}\label{to2}
\| XV(\cdot,z,\cdot)\|^2_{L^2(\Sigma)} \leq C(a, M)\|u\|^2_{\mathcal{H}^{2s}}.
\end{equation}
The desired conclusion \eqref{en2} now follows from the estimates \eqref{to1}, \eqref{tuo} and \eqref{to2}. 

\end{proof}

We close this section by briefly discussing the extension problem for $(-\La)^s$. The relevant result is Theorem \ref{wk1} below, whose proof we do not present since it follows from that of Theorem \ref{wk2} and from Bochner's subordination.
For a given $s \in (0,1)$, let $a= 1-2s$. Now given $u\in W^{2s}$, let
\begin{eqnarray}
\label{U}
U(x,z)&\ =\ &
\int_{\R^n}\left(\int_0^\infty P^{(a)}_z(x,y,t)\,dt\right)\, u(y)\,dy \\
\nonumber
&\ =\ &
\frac{z^{1-a}}{2^{1-a}\Gamma\left(\frac{1-a}{2}\right)}\,\int_{\R^n}\left(\int^{\infty}_0\frac{e^{-\frac{z^2}{4t}}}{t^{\frac{1-a}{2}}}\,p(x,y,t)\,\frac{dt}{t}\right)\,u(y)\,dy\\
\nonumber
&\ =\ &\frac{z^{1-a}}{2^{1-a}\Gamma\left(\frac{1-a}{2}\right)}\,\int^{\infty}_0\frac{e^{-\frac{z^2}{4t}}}{t^{\frac{1-a}{2}}}\,P_tu(x)\,\frac{dt}{t}\\
\nonumber
&\ =\ &\frac{z^{1-a}}{2^{1-a}\Gamma\left(\frac{1-a}{2}\right)}\int^{\infty}_0\frac{e^{-\frac{z^2}{4t}}}{t^{\frac{1-a}{2}}}\left(\int^{\infty}_0e^{\lambda t}\,dE(\lambda)u\right)\,\frac{dt}{t}.
\end{eqnarray}  
Differentiating under the integral sign and  by  arguing as in the proof of  Proposition   10.4 in \cite{G} we have that 
\begin{equation}\label{strong}
\La_a U \overset{def}{=} z^{a}\left(\La U+\B_aU\right) = 0,\ \ \ z >0.
\end{equation}
The operator $\La_a$ can be written as
\begin{equation}
\label{extdivU}
\La_a  = -\sum^{m+1}_{i=1}\tilde{X}^{\star}_i\left(\omega(x,z)\tilde{X}_i\right),
\end{equation}
where $\tilde{X}_i$, $i=1,...,m+1$ and $\omega$ are as in \eqref{tildes}. 
The fact that $U$ defined above is a  weak solution to  the following boundary  problem 
\begin{equation}\label{wk}
\begin{cases}
\La_a U=0\ \text{in}\ \{z>0\},
\\
U(x, z)=u(x)\ \text{in}\ L^{2}(\R^n),
\end{cases}
\end{equation}
and that $(-\La)^s$ can be realized as a weighted Dirichlet-to-Neumann map corresponding to the operator  $\La_a$, is a consequence of the following theorem. 

\begin{theorem}\label{wk1}
For a given $u \in W^{2s}$, let $U$ be as in \eqref{U}. Then, $U$ solves the partial differential equation in \eqref{wk}. Furthermore, we have 
\begin{equation}\label{tr1}
\lim_{z\rightarrow 0^+}\|U(\cdot,z)-u\|_{L^{2}\left(\R^{n}\right)}=0,
\end{equation}
and
\begin{equation}\label{dn1}
-\frac{2^{-a}\Gamma\left(\frac{1-a}{2}\right)}{\Gamma\left(\frac{1+a}{2}\right)}\lim_{z\rightarrow 0^+}z^a\frac{\pa U}{\pa z}(x,z) = \Ls^{s}u(x)\ \text{in $L^{2}(\R^n)$}.
\end{equation}
Finally, for every $M>0$ and $\Sigma= \R^n \times (0, M)$, one has
\begin{equation}\label{en1}
\|U\|_{\cL^{1,2}(\Sigma, z^a dx dz)} \leq  C\|u\|_{2s},
\end{equation}
for some $C=C(a,M)>0$. 
\end{theorem}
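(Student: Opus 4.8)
The plan is to establish the three limits \eqref{tr1}, \eqref{dn1} and the energy estimate \eqref{en1}; that $U$ solves the equation in \eqref{wk} has already been recorded in \eqref{strong}. The whole argument is that of Theorem \ref{wk2} with the time-independent datum $u$ in place of $u(\cdot,t)$: no Fourier transform in $t$ is needed, and, because the spectral variable appearing in every integral is now the real positive number $\lambda$ rather than $\lambda+2\pi i\sigma$, each contour-rotation step in the proof of Theorem \ref{wk2} degenerates to the elementary real substitution $\tau\mapsto\tau/\lambda$.

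From the last line of \eqref{U} together with \eqref{ident} one obtains the stationary analogue of \eqref{difvu},
\[
U(x,z)-u(x)=\frac{z^{1-a}}{2^{1-a}\Gamma(\frac{1-a}{2})}\int_0^\infty\left(\int_0^\infty\frac{e^{-z^2/(4\tau)}}{\tau^{(3-a)/2}}\left(e^{-\lambda\tau}-1\right)d\tau\right)dE(\lambda)u(x);
\]
substituting $\tau=\theta/\lambda$ and using $|e^{-\theta}-1|\le 1-e^{-\theta}$ gives, since $1-a=2s$,
\[
\|U(\cdot,z)-u\|_{L^2(\R^n)}^2\le C(a)\,z^{4s}\int_0^\infty\lambda^{2s}\,d\|E(\lambda)u\|^2=C(a)\,z^{4s}\,\|\Ls^s u\|_{L^2(\R^n)}^2,
\]
which tends to $0$ as $z\to 0^+$ since $u\in W^{2s}$; this is \eqref{tr1}. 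Differentiating the displayed identity in $z$, invoking \eqref{observe}, integrating by parts in $\tau$ and again substituting $\tau=\theta/\lambda$ --- exactly as in the passage from \eqref{gud} to \eqref{verygud} --- yields
\[
z^a\frac{\pa U}{\pa z}(x,z)=-\frac{1}{2^{-a}\Gamma(\frac{1-a}{2})}\int_0^\infty\lambda^s\left(\int_0^\infty\frac{e^{-\theta}e^{-\lambda z^2/(4\theta)}}{\theta^{(1-a)/2}}\,d\theta\right)dE(\lambda)u(x).
\]
Since $\Ls^s u=\int_0^\infty\lambda^s\,dE(\lambda)u$ by \eqref{fracsl} and $\Gamma(\frac{1+a}{2})=\int_0^\infty\theta^{-(1-a)/2}e^{-\theta}\,d\theta$, the squared $L^2$-norm of $\frac{2^{-a}\Gamma((1-a)/2)}{\Gamma((1+a)/2)}z^a\pa_z U(\cdot,z)+\Ls^s u$ equals $\int_0^\infty\lambda^{2s}\left(\Gamma(\frac{1+a}{2})^{-1}\int_0^\infty\theta^{-(1-a)/2}e^{-\theta}e^{-\lambda z^2/(4\theta)}\,d\theta-1\right)^2 d\|E(\lambda)u\|^2$; the parenthesis lies in $[-1,0]$ and, for each $\lambda$, tends to $0$ as $z\to 0^+$, while $\lambda^{2s}\,d\|E(\lambda)u\|^2$ is a finite measure, so dominated convergence gives \eqref{dn1}.

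For \eqref{en1} the $L^2$-term is immediate: Minkowski's inequality, \eqref{ident} and $\|P_t u\|_{L^2}\le\|u\|_{L^2}$ yield $\|U(\cdot,z)\|_{L^2(\R^n)}\le\|u\|_{L^2(\R^n)}$ for every $z>0$, hence $\|U\|_{L^2(\Sigma,z^a dx dz)}\le C(a,M)\|u\|_{L^2(\R^n)}$ because $\int_0^M z^a\,dz<\infty$. For the gradient terms we use $\|Xf\|_{L^2(\R^n)}=\|\Ls^{1/2}f\|_{L^2(\R^n)}$, so that substituting the spectral form of $U$ into $\|\Ls^{1/2}U(\cdot,z)\|_{L^2}^2$ and into the identity for $\pa_z U$ just obtained renders both $\|XU(\cdot,z)\|_{L^2}^2$ and $\|\pa_z U(\cdot,z)\|_{L^2}^2$ in the form $\int_0^\infty(\text{monomial in }z,\lambda)\left|\int_0^\infty t^{-\kappa}e^{-z^2/(4t)}e^{-\lambda t}\,dt\right|^2 d\|E(\lambda)u\|^2$ with $\kappa=(3-a)/2$, resp. $\kappa=(1-a)/2$; the inner $t$-integral is evaluated by $\int_0^\infty t^{\nu-1}e^{-(\beta/t+\gamma t)}dt=2(\beta/\gamma)^{\nu/2}K_\nu(2\sqrt{\beta\gamma})$ from \cite{GR}, which brings in the Macdonald functions $K_{(1-a)/2}(z\sqrt\lambda)$ (for $XU$) and $K_{(1+a)/2}(z\sqrt\lambda)$ (for $\pa_z U$). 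Integrating over $z\in(0,M)$ against $z^a\,dz$ and putting $\rho=z\sqrt\lambda$ reduces everything to $\int_0^M z\,K_\nu(z\sqrt\lambda)^2\,dz\le\lambda^{-1}\int_0^\infty\rho\,K_\nu(\rho)^2\,d\rho$, and this last integral is finite for $\nu=\frac{1-a}{2}$ and $\nu=\frac{1+a}{2}$, since $\rho K_\nu(\rho)^2\sim c\,\rho^{1-2\nu}$ near $0$ with $1-2\nu=\pm a\in(-1,1)$ while $K_\nu$ decays exponentially at infinity (asymptotics as in \cite{Le}). Tracking the exponents, with $1-a=2s$ used repeatedly, gives $\|XU\|_{L^2(\Sigma,z^a dx dz)}^2+\|\pa_z U\|_{L^2(\Sigma,z^a dx dz)}^2\le C(a,M)\int_0^\infty\lambda^s\,d\|E(\lambda)u\|^2\le C(a,M)\|u\|_{2s}^2$, and \eqref{en1} follows by combining the three bounds.

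Beyond what is already done in Theorem \ref{wk2}, the proof presents no conceptual difficulty; the delicate points are the routine justifications for differentiating under the integral sign and for the Fubini interchanges between the spectral resolution $\int_0^\infty\cdots\,dE(\lambda)$ and the $\tau$- and $z$-integrations, together with the exponent bookkeeping in the Bessel-function estimates --- which, precisely as in Theorem \ref{wk2}, is what guarantees that each $\tau$- and $\theta$-integral closes as $\lambda^{(1\pm a)/2}$ times a bounded factor, so that the final estimate is controlled by $\|u\|_{2s}$ rather than by a stronger norm. (Alternatively, \eqref{tr1}--\eqref{en1} can be deduced from Theorem \ref{wk2} by Bochner's subordination; the direct spectral argument sketched here is self-contained.)
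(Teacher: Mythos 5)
Your proposal is correct, and it is worth noting how it relates to the paper, which in fact does not write out a proof of Theorem \ref{wk1} at all: it only remarks that the result ``follows from that of Theorem \ref{wk2} and from Bochner's subordination.'' What you do instead is rerun the Theorem \ref{wk2} machinery directly on time-independent data: the stationary analogue of \eqref{difvu}, the integration by parts via \eqref{observe} leading to the stationary analogue of \eqref{verygud}, and the Macdonald-function estimates for the energy bound. Your observation that the contour-rotation steps (Cauchy's theorem on $\Gamma_{\ve,R}$) collapse to the real substitution $\tau=\theta/\lambda$ is exactly right, since the spectral parameter $\lambda>0$ replaces $\lambda+2\pi i\sigma$; and your exponent bookkeeping is correct: for both $\|XU(\cdot,z)\|^2$ and $\|\pa_z U(\cdot,z)\|^2$ the Bessel formula yields $z^{1-a}\lambda^{\frac{3-a}{2}}K_\nu(z\sqrt\lambda)^2$ with $\nu=\frac{1\mp a}{2}$, and $\int_0^M z\,K_\nu(z\sqrt\lambda)^2dz\le \lambda^{-1}\int_0^\infty\rho K_\nu(\rho)^2d\rho<\infty$ (the integrand behaves like $\rho^{\pm a}$ near $0$, with $\pm a\in(-1,1)$), so the total comes out as $\int_0^\infty\lambda^{s}\,d\|E(\lambda)u\|^2\le C\|u\|_{2s}^2$ uniformly in $\lambda$. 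One genuine simplification your route buys: the splitting of the spectral integral at $\lambda=1$ that the paper needs in the estimate of $\|\Ls^{1/2}\widehat V\|$ (because the factor $\la^{\frac{a-1}{2}}|\la+2\pi i\sigma|^{1-a}$ is only controlled for $\la\ge1$) is unnecessary in the stationary case, since without the $\sigma$-variable the powers of $\lambda$ close exactly to $\lambda^{s}$ for all $\lambda>0$; your computation implicitly uses this and is sound. What the paper's intended subordination route buys, conversely, is brevity: given Theorem \ref{wk2}, no new estimates are needed. Either way the statement is established; the only housekeeping you should be explicit about, as you yourself flag, is the justification of differentiation under the integral sign and of the interchanges of the $\tau$- and $z$-integrations with $\int_0^\infty\cdots\,dE(\lambda)$, which are routine (Tonelli for the nonnegative integrands, and the boundary terms in the $\tau$-integration by parts vanish since $e^{-z^2/(4\tau)}\to0$ as $\tau\to0^+$ and $\tau^{-\frac{1-a}{2}}\to0$ as $\tau\to\infty$).
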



\section{Proof of the Harnack inequality for $\cH^s$}\label{final}

As it has been mentioned in the introduction, the Harnack inequality for  $\cH^s$ and $\Ls^s$ is derived from that of the corresponding degenerate local operators $\cH_a$ and $\La_a$ in \eqref{extdiv} and \eqref{extdivU} respectively. Since the Harnack inequality for the time-independent operator $\La_a$ descends easily from that of the time-dependent operator $\cH_a$, we focus on this one. From the works \cite{Gry} and \cite{Sa} it is known that the Harnack inequality for parabolic operators generated by smooth Dirichlet forms is equivalent to the doubling condition and the Poincar\'e inequality for the corresponding metric balls  associated with the \emph{carr\'e du champ}. Now, because of the presence of the non-smooth weight $|z|^a$ the extended operator in \eqref{extdiv} does not satisfy the structural assumptions in \cite{Gry}, \cite{Sa}. Nevertheless, the ideas in those papers, or more precisely the arguments in \cite{CSe}, do apply once we have the appropriate doubling condition and the Poincar\'e inequality in the extended space.
We thus turn to verify the validity of these properties for the metric balls corresponding to the extended system of vector fields \eqref{tildes}.

For a given $(x,z) \in \Rn \times \R$ and for $r>0$ we consider the cylinders
\[
C_r(x,z)= B(x,r) \times (z-r, z+r),
\]
where $B(x,r)$ is the ball in the control distance \eqref{d} corresponding to $\La$. Then, the doubling property
\begin{equation}\label{doubling1}
|C_{2r}(x,z)| \leq C\ |C_r(x,z)|
\end{equation}
follows immediately from \eqref{doubling}.  We now show that the doubling property for the metric balls follows from \eqref{doubling1}. 
Observe that  the  \emph{carr\'e du champ} relative to  the extended system $\tilde X$ is
 \[
|\tilde X u|^2 = |Xu|^2 + (\partial_z u)^2.
\]
Since we have assumed that $d$ is in fact a distance, and $\partial_z$ is independent of $\La$, it easily follows that 
 \begin{equation}\label{td}
\tilde d((x,z),(x',z')) = \sup\{|\vf(x,z) - \vf(x',z')|\mid \vf\in C^\infty(\R^{n+1}),\ |\tilde X\vf|\le 1\}
\end{equation}
is also a distance. 
We also infer from  \eqref{td} that there exist constants $0<\sigma_1 < \sigma_2$ such that 
\begin{equation}\label{comp1}
\tilde B((x,z),\sigma_1 r) \subset C_r(x,z) \subset \tilde B((x,z), \sigma_2 r),
\end{equation}
where $\tilde B((x,z),r)$ denotes the  ball relative to $\tilde d$ centred at $(x,z)$ and of radius $r$. From \eqref{doubling1} and \eqref{comp1} we easily conclude the validity of  
\begin{equation}\label{db1}
|\tilde B((x,z), 2r)| \leq C |\tilde B((x,z), r)|,
\end{equation}
for a new constant $C$ depending also on $\sigma_1$ and $\sigma_2$.

We next show  that the following Poincar\'e inequality holds,
\begin{equation}\label{poi1}
\int_{\tilde B((x,z), r)}  (f - f_{\tilde B((x,z), r)})^2  |z|^a dxdz \leq  C_1 r^2  \int_{\tilde B((x,z),  2r)}  |\tilde Xf|^2 |z|^a dxdz,
\end{equation}
where
\[
f_{\tilde B((x,z), r)}= \frac{\int_{\tilde B((x,z), r)}  f\  |z|^a dxdz}{\int_{\tilde B((x,z), r)} |z|^a dxdz}.
\]
To prove \eqref{poi1} we observe that, on one hand, \eqref{poi} is valid by hypothesis. On the other hand, since $\omega(z) = |z|^a$ is an $A_2$-weight on the real line, the following one-dimensional Poincar\'e inequality holds (see for instance \cite{FKS}),
\begin{equation}\label{1d}
\int_{(z-r, z+r)}  (f- f_{z,r})^2 |z|^a dz \leq C \int_{(z-r, z+r)}  f'(z)^2 |z|^a dz,
\end{equation}
where
\[
f_{z,r}= \frac{\int_{(z-r, z+r)}  f |z|^a dz}{\int_{(z-r, z+r)} |z|^a dz}.
\]
We can thus apply the real analysis argument in the proof of \cite[Lemma 2]{LW}  to show that the Poincar\'e inequality holds with respect to the  cylinders $\{C_r(x,z)\}$, i.e.,
\begin{equation}\label{poi2}
\int_{ C_r(x,z)} (f - f_{ C_r(x,z)})^2  |z|^a  dxdz \leq  C_1 r^2  \int_{ C_{2r}(x,z)} |\tilde Xf|^2 |z|^a  dxdz,
\end{equation}
where
\[
f_{ C_r(x,z)}= \frac{\int_{C_r(x,z)} f  |z|^a  dxdz}{\int_{C_r(x,z)} |z|^a dxdz}.
\]
With $\omega(\tilde B((x,z), r)) = \int_{\tilde B((x,z), r)} |z|^a dz dx$, it also follows from  \eqref{comp1} that the following weighted doubling condition holds,
\begin{equation}\label{db2}
\omega(\tilde B((x,z), 2r)) \leq C_1 \omega(\tilde B((x,z), r)).
\end{equation}
Thus, using \eqref{comp1}, \eqref{poi2}
 and \eqref{db2}, by a standard argument (see the proof of  \cite[Theor. 1]{HK}, and also \cite{GN}, \cite{J}),  we conclude that \eqref{poi1} holds.  Moreover, arguing as in the proof of \cite[Theor. 2.1]{GN}, or as in \cite[Theor. 1]{HK}, from \eqref{poi1} and \eqref{db2} we infer that the following Sobolev type inequality holds for all $0<r \leq 1$, and every $\phi \in C_0^{0,1}(\tilde B((x,z),r))$, 
\[
\bigg(\int_{\tilde B((x,z), r)} |\phi|^{2\kappa} |z|^a dx dz \bigg)^{\frac{1}{2\kappa}}  \leq C \bigg(\int_{\tilde B((x,z), r)} |X \phi|^2 |z|^a  dx dz \bigg)^{1/2},
\]
for some $\kappa >1$ depending  on the  doubling constant  in \eqref{db2}.
At this point, we can apply the circle of ideas in \cite{Gry} and  \cite{Sa}  to deduce that the following Harnack inequality holds for the degenerate parabolic operator $\cH_a$ in \eqref{extdiv}.

\begin{theorem}\label{harext}
Let $U$ be a solution to $\cH_a U=0$  in $C_r(x,z) \times (-r^2, 0]$ with $\cH_a$ as in \eqref{extdiv}. Then, the following scale invariant Harnack inequality holds,
\begin{equation}\label{sc1}
\sup_{C_{r/2} (x,z) \times \{t= -\frac{r^2}{2}\} } U \leq C_0 \inf_{C_{r/2} (x,z) \times \{t= 0\} } U,
\end{equation}
for some universal constant $C_0>0$ depending on the constants in \eqref{poi1} and \eqref{db2}. 
\end{theorem}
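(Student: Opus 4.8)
The plan is to derive \eqref{sc1} by carrying out Moser's iteration for the operator $\cH_a$ of \eqref{extdiv} with respect to the weighted measure $d\mu=|z|^a\,dx\,dz$, the intrinsic distance $\tilde d$ of \eqref{td}, and the parabolic cylinders $C_r(x,z)\times(t-r^2,t]$. Every analytic ingredient this requires has just been assembled: the volume doubling properties \eqref{db1}--\eqref{db2}, the weighted Poincar\'e inequality \eqref{poi1}, and the weighted Sobolev inequality deduced from them. One preliminary remark is that, since $-1<a<1$, the one-dimensional weight $|z|^a$ belongs to the Muckenhoupt class $A_2(\R)$; combining this with \eqref{doubling} and Fubini, $|z|^a$ is an $A_2$ weight relative to the balls $\tilde B((x,z),r)$, so that the degenerate De Giorgi--Nash--Moser machinery of Fabes--Kenig--Serapioni \cite{FKS} and its parabolic counterpart of Chiarenza--Serapioni \cite{CSe} applies to $\cH_a$ with only notational changes — Euclidean balls and cylinders being replaced by $\tilde d$-balls and the cylinders $C_r(x,z)$.

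First I would establish the Caccioppoli (energy) inequality. Given a nonnegative weak sub- or supersolution $U$ of $\cH_a U=0$ in a cylinder and a cutoff $\phi$ compactly supported in the $(x,z)$-variables, one tests the weak formulation \eqref{d8} with $\phi^2U^{\beta}$ (truncated when $\beta<0$, and with the $\{z=0\}$-term in \eqref{d8} absent since $\phi$ is supported away from that face, or equivalently since $\cH_a$ is here considered on the full space where $|z|^a$ is globally $A_2$). For the admissible range of $\beta$ this controls $\sup_t\int\phi^2U^{\beta+1}\,d\mu$ and $\iint|\tilde X(\phi\,U^{(\beta+1)/2})|^2\,d\mu\,dt$ in terms of $\iint\big(|\tilde X\phi|^2+|\phi\,\phi_t|\big)U^{\beta+1}\,d\mu\,dt$. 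Feeding this into the weighted Sobolev inequality yields the self-improvement of integrability on a shrinking family of sub-cylinders, and iterating over a geometric sequence of radii produces, for every $p_0>0$, the estimate $\sup_{C_{r/2}\times(-r^2/4,0]}U\le C\big(\mu(C_r\times(-r^2,0])^{-1}\iint_{C_r\times(-r^2,0]}U^{p_0}\,d\mu\,dt\big)^{1/p_0}$ for subsolutions, and the companion lower bound $\inf_{C_{r/2}\times(-r^2/4,0]}U\ge c\big(\mu(Q^-)^{-1}\iint_{Q^-}U^{-p_0}\,d\mu\,dt\big)^{-1/p_0}$ for positive supersolutions, with $Q^-$ a cylinder over an earlier time interval. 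Taking $p_0$ arbitrarily small, and negative in the supersolution case, is Moser's usual device, using the invariance of $\cH_aU=0$ under $U\mapsto cU$.

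The hard part will be the crossover inequality bridging a small positive and a small negative power, namely an estimate of the shape $\big(\mu(Q^+)^{-1}\iint_{Q^+}U^{p_0}\,d\mu\,dt\big)^{1/p_0}\le C\big(\mu(Q^-)^{-1}\iint_{Q^-}U^{-p_0}\,d\mu\,dt\big)^{-1/p_0}$ with $Q^+$ over a later and $Q^-$ over an earlier time interval; this is exactly what forces the time lag between the sup-slice and the inf-slice in \eqref{sc1}. To prove it I would test the equation for a positive supersolution $U$ with $\phi^2/U$, obtaining the logarithmic energy estimate $\iint|\tilde X(\log U)|^2\phi^2\,d\mu\,dt\le C\iint\big(|\tilde X\phi|^2+|\phi\,\phi_t|\big)d\mu\,dt$; combined with the Poincar\'e inequality \eqref{poi1} this places $\log U$ in a parabolic BMO space adapted to the cylinders $C_r$, whereupon a parabolic John--Nirenberg / Bombieri--Giusti lemma — carried out as in \cite{CSe} (see also \cite{Sa}, \cite{Gry}, \cite{CS}) — produces the crossover for some $p_0>0$ depending only on the structural constants. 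This step uses nothing about the underlying space beyond doubling and Poincar\'e, and it is the only genuinely parabolic rather than elliptic point in the proof, so it is where the main technical care is needed; the transfer from the Euclidean argument of \cite{CSe} is, however, purely formal.

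Finally, chaining the three estimates — the $L^{p_0}$-to-$L^\infty$ bound on the top cylinder, the crossover in the middle, and the $L^{-p_0}$-to-$\inf$ bound on the bottom cylinder — yields \eqref{sc1} with a constant $C_0$ depending only on the constants in \eqref{poi1} and \eqref{db2} (and on $a$ through the $A_2$-bound of $|z|^a$), hence universal in the stated sense. Throughout, all integrations in the extra variable $z$ are handled by Fubini together with the one-dimensional $A_2$-property of $|z|^a$, so the degeneracy of the weight introduces no genuinely new difficulty.
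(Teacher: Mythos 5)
Your proposal is correct and follows essentially the same route as the paper: the paper's proof consists precisely of verifying the weighted doubling \eqref{db2}, Poincar\'e \eqref{poi1} and the resulting Sobolev inequality for the balls of $\tilde d$ with measure $|z|^a\,dx\,dz$, and then invoking the Moser-type machinery of \cite{Gry}, \cite{Sa} and \cite{CSe}, which is exactly the Caccioppoli--Sobolev iteration plus logarithmic estimate and John--Nirenberg crossover that you spell out. The only difference is one of presentation: you re-derive the iteration scheme explicitly, whereas the paper delegates it to the cited references.
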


\begin{remark}
We mention that, when the system $\tilde X$ is of H\"ormander type, the Poincar\'e inequality \eqref{poi1} follows from Lu's work \cite{Lu}.
\end{remark}

\begin{remark}
We also note that given the validity of  \eqref{db1} and \eqref{poi1}, it follows from Theorem 4.4 in \cite{Sa} that  the operator $\cH_a$ is hypoelliptic  for $z>0$. 

\end{remark}

With Theorem \ref{harext} in hands, we are finally able to provide the   

\begin{proof}[Proof of Theorem \ref{parharnack}]

It follows from Theorem \ref{wk2} that, given a function $u\in H^{2s}$, the function $V$ defined in \eqref{v} is a weak solution to 
\begin{equation}
\begin{cases}
\cH_a V = 0 \ \ \ \ \ \ \text{for}\ z>0,
\\
V(x,0, t) =  u(x,t),
\\
\underset{z \to 0^+}{\lim} z^a \partial_z V  =  0\ \text{in}\ B(x, 2r) \times (-2r^2, 0],
\end{cases}
\end{equation}
in the sense of Definition \ref{wk0} where $\cH_a$ is  the extension operator in \eqref{extdiv}. 
Now thanks  to the  vanishing Neumann condition, if $V$ is evenly reflected,  it follows by a standard  argument that  the extended  $V$  is  a weak solution to $\cH_a V=0$ in $C_{2r}((x, 0)) \times (-2r^2, 0]$.    At this point,   we can apply the scale invariant Harnack inequality \eqref{sc1} to $V$  and subsequently by  arguing as in the proof of Theorem 4.8 in \cite{FF} we  obtain the desired conclusion.

\end{proof}


\begin{thebibliography}{99}


\bibitem{B1} A. V. Balakrishnan, \emph{On the powers of the infinitesimal generators of groups and semigroups of linear bounded transformations}, Thesis (Ph.D.)-University of Southern California. 1954. (no paging), ProQuest LLC.

\bibitem{B2} A. V. Balakrishnan, \emph{Fractional powers of closed operators and the semigroups generated by them}, Pacific J. Math. 10 (1960), 419-437.

\bibitem{BaG}
A. Banerjee \& N. Garofalo, \emph{Monotonicity of generalized frequencies and the strong unique continuation property for fractional parabolic equations}, Adv. Math., \textbf{336}~(2018), 149-241.



\bibitem{BM}
M. Biroli \& U. Mosco, \emph{Formes de Dirichlet et estimations structurelles dans les milieux discontinus}. (French) [Dirichlet forms and structural estimates in discontinuous media] C. R. Acad. Sci. Paris S\'er. I Math. \textbf{313}~(1991), no. 9, 593-598.

\bibitem{BM2}
M. Biroli \& U. Mosco, \emph{A Saint-Venant type principle for Dirichlet forms on discontinuous media}. Ann. Mat. Pura Appl. (4) \textbf{169}~(1995), 125-181.

\bibitem{Bo}
K. Bogdan, \emph{The boundary Harnack principle for the fractional Laplacian}, Studia Math., \textbf{123}~(1997), 43-80. 

\bibitem{BBLU}
M. Bramanti, L.  Brandolini, E. Lanconelli \& F. Uguzzoni
\emph{Non-divergence equations structured on Hörmander vector fields: heat kernels and Harnack inequalities}. 
Mem. Amer. Math. Soc., \textbf{204}~(2010), no. 961, vi+123 pp. ISBN: 978-0-8218-4903-3 

\bibitem{CSe}
F. Chiarenza \& R. Serapioni, \emph{A remark on a Harnack inequality for degenerate parabolic equations.}, Rend. Sem. Mat. Univ. Padova \textbf{73}~ (1985), 179-190.  

\bibitem{CSire}
X. Cabr\'e \& Y. Sire,
\emph{Nonlinear equations for fractional Laplacians, I: Regularity, maximum principles, and Hamiltonian estimates}. (English summary) 
Ann. Inst. H. Poincar\'e Anal. Non Lin\'eaire \textbf{31}~(2014), no. 1, 23-53. 

\bibitem{CS}
L. Caffarelli \& L. Silvestre, \emph{An extension problem related to the fractional Laplacian}, Comm. Partial Differential Equations \textbf{32}~(2007), no. 7-9, 1245-1260.
\bibitem{CSS}
L. A. Caffarelli, S. Salsa \& L. Silvestre,  \emph{Regularity estimates for the solution and the free boundary of the obstacle problem for the fractional Laplacian}, Invent. Math. \textbf{171}~(2008), no. 2, 425-461.

\bibitem{CSe}
F. Chiarenza \& R. Serapioni, \emph{A remark on a Harnack inequality for degenerate parabolic equations}. Rend. Sem. Mat. Univ. Padova \textbf{73}~(1985), 179-190.

\bibitem{Davies}
E. B. Davies, \emph{Heat kernels and spectral theory}. Cambridge Tracts in Mathematics, 92. Cambridge University Press, Cambridge, 1990. x+197 pp. 

 \bibitem{FKS}
 E. Fabes, C. Kenig \& R. Serapioni, \emph{The local regularity of solutions of degenerate elliptic equations}, Comm. Partial Differential Equations, \textbf{7}~(1982), 77-116. 


\bibitem{FP}
C. Fefferman \& D. Phong, \emph{Subelliptic eigenvalue problems. Conference on harmonic analysis in honor of Antoni Zygmund},  Vol. I, II (Chicago, Ill., 1981), 590-606,
Wadsworth Math. Ser., Wadsworth, Belmont, CA, 1983.

 \bibitem{FF}
 F. Ferrari \& B. Franchi, \emph{Harnack inequality for fractional sub-Laplacians in Carnot group}, Math. Z. \textbf{279}~ (2015), no. 1-2, 435-458. 


\bibitem{FMPPS}
F. Ferrari, M. Miranda, D. Pallara, A. Pinamonti \& Y. Sire, \emph{Fractional Laplacians, perimeters and heat semigroups in Carnot groups}, Discrete Contin. Dyn. Syst. Ser. S \textbf{11}~ (2018), no. 3, 477-491.

\bibitem{Fo}
 G.B. Folland, \emph{Subelliptic estimates and function spaces on nilpotent Lie groups}, Ark. Math., \textbf{13} (1975), 161-207.
 
 
 
 
\bibitem{Gft}
N. Garofalo, \emph{Fractional thoughts}. New developments in the analysis of nonlocal operators, 1-135, Contemp. Math., 723, Amer. Math. Soc., Providence, RI, 2019. 


\bibitem{G}
N. Garofalo, \emph{Some properties of sub-Laplacians}. Proceedings of the International Conference "Two nonlinear days in Urbino 2017", 103-131, Electron. J. Differ. Equ. Conf., 25, Texas State Univ.-San Marcos, Dept. Math., San Marcos, TX, 2018. 

\bibitem{GN}
N. Garofalo \& D. Nhieu, \emph{Isoperimetric and Sobolev inequalities for Carnot-Carath\'eodory spaces and the existence of minimal surfaces},  Comm. Pure Appl. Math. \textbf{49}~ (1996), no. 10, 1081-1144.

\bibitem{GT}
N. Garofalo \& G. Tralli, \emph{A class of nonlocal hypoelliptic operators and their extensions}, arXiv: 1811.02968. To appear in Indiana Univ. Math. J.  

\bibitem{GR}
I. S. Gradshteyn \& I. M. Ryzhik, \emph{Tables of integrals, series,
and products}, Academic Press, 1980.

\bibitem{Gri}
A. A. Grigor'yan, \emph{Stochastically complete manifolds}. (Russian) Dokl. Akad. Nauk SSSR \textbf{290}~(1986), no. 3, 534-537. 


\bibitem{Gry}
A. A. Grigor'yan, \emph{The heat equation on noncompact Riemannian manifolds}. (Russian) 
Mat. Sb. 182 (1991), no. 1, 55-87; translation in 
Math. USSR-Sb. 72 (1992), no. 1, 47-77. 

\bibitem{HK}
P. Hajlasz \& P. Koskela, \emph{Sobolev met Poincar\'e}. Mem. Amer. Math. Soc. \textbf{145}~(2000), no. 688, x+101 pp. 


\bibitem{Hor} 
L. H\"{o}rmander,
\emph{Hypoelliptic second order differential equations}, 
Acta Math., \textbf{119} (1967), 147--171.

\bibitem{J}
D. Jerison, \emph{The Poincar\'e inequality for vector fields satisfying H\"ormander's condition}. Duke Math. J. \textbf{53}~ (1986), no. 2, 503-523.

\bibitem{JS}
D. Jerison \& A. Sanchez-Calle, \emph{Estimates for the heat kernel for a sum of squares of vector fields}, Indiana Univ. Math. J., \textbf{35}~(1986), no.4, 835-854.

\bibitem{Jo}
F. B. Jones, Jr., \emph{Lipschitz spaces and the heat equation}. J. Math. Mech. \textbf{18}~(1968/69), 379-409.

\bibitem{Ka1}
M. Kassmann, \emph{A priori estimates for integro-differential operators with measurable kernels}. Calc. Var. Partial Differential Equations \textbf{34}~(2009), no. 1, 1-21.

\bibitem{Ka2}
M. Kassmann, \emph{A new formulation of Harnack's inequality for nonlocal operators}. C. R. Math. Acad. Sci. Paris \textbf{349}~(2011), no. 11-12, 637-640. 


\bibitem{La}
N.S. Landkof, \emph{Foundations of modern potential theory}, Springer-Verlag, New York 1972. 

\bibitem{Le}
N. N. Lebedev, \emph{Special functions and their applications}, Revised edition, translated from the Russian and edited by R. A. Silverman. Unabridged and corrected republication. Dover Publications, Inc., New York, 1972. xii+308 pp.


\bibitem{Lu} 
G. Lu, \emph{Weighted Poincar\'e and Sobolev inequalities for vector fields satisfying H\"ormander’s condition and applications}, Rev. Mat. Iberoamericana \textbf{8} (1992), no. 3, 367-439.

\bibitem{LW}
G. Lu \& R. Wheeden, \emph{Poincare inequalities, isoperimetric estimates, and representation formulas on product spaces}, Indiana Univ. Math. J. \textbf{47}~(1998), no. 1, 123-151.
\bibitem{NSW}
A. Nagel, E. M. Stein \& S. Wainger, \emph{Balls and metrics defined by vector fields. I. Basic properties}. Acta Math. \textbf{155}~(1985), no. 1-2, 103-147. 


\bibitem{NS}
K. Nystr\"om \& O. Sande, \emph{Extension properties and boundary estimates for a fractional heat operator}, Nonlinear Analysis, \textbf{140}~(2016), 29-37.

\bibitem{S-CM}
P. Maheux \& L. Saloff-Coste, \emph{ Analyse sur les boules d'un op\'erateur sous-elliptique}. Math. Ann. 303, \textbf{4}
(1995), 713–740.

\bibitem{Sa}
L. Saloff-Coste, \emph{A note on Poincar\'e, Sobolev and Harnack inequalities}, Internat. Math. Res. Notices \text{2}~(1992), 27-38.

\bibitem{Se}
V. Serov, \emph{Fourier series, Fourier transform and their applications  to mathematical physics}, Applied Mathematical Sciences, 197. Springer, Cham, 2017, xi+534 pp.

\bibitem{SV}
Y. Sire \& E. Valdinoci, \emph{Fractional Laplacian phase transitions and boundary reactions: a geometric inequality and a symmetry result.} J. Funct. Anal., \text{256}~(2009), 1842-1864. 

\bibitem{ST}
P. R. Stinga \& J. L. Torrea, \emph{Regularity theory and extension problem for fractional nonlocal parabolic equations and the master equation}, SIAM J. Math. Anal., \textbf{49}~(2017), no. 5, 3893-3924.

\bibitem{Sturm}
K.-T. Sturm, \emph{Analysis on local Dirichlet spaces. I. Recurrence, conservativeness and $L^p$-Liouville properties}. J. Reine Angew. Math. \textbf{456}~(1994), 173-196. 
 
 
\bibitem{Var}
N. Th. Varopoulos, \emph{Fonctions harmoniques sur les groupes de Lie}. (French) [Harmonic functions on Lie groups] C. R. Acad. Sci. Paris S\'er. I Math. \textbf{304} (1987), no. 17, 519-521. 

\bibitem{Y}
K. Yosida, \emph{Functional analysis}. Die Grundlehren der Mathematischen Wissenschaften, Band 123 Academic Press, Inc., New York; Springer-Verlag, Berlin 1965 xi+458 pp.


\end{thebibliography}
\end{document}